\definecolor{bl}{gray}{0.9}
\def\FR{\mathop{\rm FR}\nolimits}
\def\PR{\mathop{\rm PR}\nolimits}
\def\St{\mathop{\rm St}\nolimits}
\def\tr{\mathop{\rm tr}\nolimits}
\def\diag{\mathop{\rm diag}\nolimits}
\def\grad{\mathop{\rm grad}\nolimits}
\def\D{{\rm D}}
\def\id{\mathop{\rm id}\nolimits}
\def\qf{\mathop{\rm qf}\nolimits}
\providecommand{\abs}[1]{\lvert#1\rvert}
\providecommand{\norm}[1]{\lVert#1\rVert}
\newtheorem{Def}{Definition}[section]
\newtheorem{Thm}{Theorem}[section]
\newtheorem{Prop}{Proposition}[section]
\newtheorem{Lemma}{Lemma}[section]
\newtheorem{Prob}{Problem}[section]
\newtheorem{predfn}{Definition}[section]
\newtheorem{prerem}[predfn]{Remark}
\newenvironment{Remark}{\begin{prerem}\itshape}{\end{prerem}}
\title{A new, globally convergent \\ Riemannian conjugate gradient method}
\author{Hiroyuki Sato\thanks{{\tt hsato@amp.i.kyoto-u.ac.jp}}\quad and\quad Toshihiro Iwai\\
Department of Applied Mathematics and Physics\\ Kyoto University, Kyoto 606-8501, Japan}
\date{\today}
\begin{document}
\maketitle
\begin{abstract}
This article deals with the conjugate gradient method on a Riemannian manifold with interest in global convergence analysis.
The existing conjugate gradient algorithms on a manifold endowed with a vector transport need the assumption that the vector transport does not increase the norm of tangent vectors, in order to confirm that generated sequences have a global convergence property.
In this article, the notion of a scaled vector transport is introduced to improve the algorithm so that the generated sequences may have a global convergence property under a relaxed assumption.
In the proposed algorithm, the transported vector is rescaled in case its norm has increased during the transport.
The global convergence is theoretically proved and numerically observed with examples.
In fact, numerical experiments show that there exist minimization problems for which the existing algorithm generates divergent sequences, but the proposed algorithm generates convergent sequences.\bigskip
\end{abstract}

\noindent {\bf Keywords:} conjugate gradient method; Riemannian optimization; global convergence;
``scaled" vector transport; Wolfe conditions

\pagenumbering{arabic}
\section{Introduction}
The conjugate gradient method was first developed by Hestenes and Stiefel as a tool for solving the linear equation $Ax=b$, where $A$ is an $n\times n$ positive definite matrix \cite{hestenes}.
The strategy of the linear conjugate gradient method is to minimize the quadratic function $x^TAx/2-b^Tx$ of $x$ in the successive search directions which are generated in such a manner that those directions are mutually conjugate with respect to $A$ and eventually span the whole $\mathbb R^n$.
As this method is generalized to be applicable to functions which are not restricted to those quadratic in $x$, the conjugate gradient method in its original form
is particularly called the linear conjugate gradient method.

According to a nonlinear conjugate gradient method for minimizing a smooth function $f$ which is not necessarily quadratic,
the search direction $\eta_k$ is determined by
\begin{equation}
\eta_k=-\grad f(x_k)+\beta_k\eta_{k-1},\label{CGR}
\end{equation}
where $\beta_k$ is a parameter to be defined suitably.
Fletcher and Reeves \cite{fletcher} proposed to define $\beta_k$ by $\beta_k:=\norm{\grad f(x_k)}^2/\norm{\grad f(x_{k-1})}^2$ (see \cite{nocedal} for another way to determine $\beta_k$).

On the other hand, iterative optimization methods on $\mathbb R^n$ have been developed so as to be applicable on Riemannian manifolds \cite{absil,edel}.
Those generalized methods are called Riemannian optimization methods, which provide procedures for minimizing objective functions defined on a Riemannian manifold $M$.
In a Riemannian optimization method, the usual line search should be replaced \cite{absil}, as the concept of a line is generalized on a Riemannian manifold.
Absil, Mahony, and Sepulchre proposed to use a retraction map to perform a search on a curve on $M$ in place of the line search.
As for the conjugate gradient method, Smith provided in \cite{smith} a conjugate gradient method on $M$ along with other optimization algorithms on $M$.
The difficulty we encounter in generalizing the conjugate gradient method to that on a manifold is that Eq.~\eqref{CGR} makes no longer sense.
This is because $\grad f(x_k)$ and $\eta_{k-1}$ belong to tangent spaces at different points on $M$ in general, so that they cannot be added.
Smith proposed to use the parallel translation along the geodesic at each iteration in order to make possible the addition of two tangent vectors and thereby to extend the iteration procedure \eqref{CGR}.
However, using the parallel translation on $M$ is not computationally effective in general.
A way to perform the conjugate gradient method on $M$ in an efficient manner is to use a vector transport \cite{absil}.
The global convergence in the conjugate gradient method with a vector transport on $M$ has been recently discussed by Ring and Wirth \cite{ring}.
They proved the global convergence under the condition that the vector transport in use does not increase the norm of the search direction vector.
On the contrary, the present article provides numerical evidence to show that if the assumption is not satisfied, the conjugate gradient method with a general vector transport may fail to generate a globally converging series.
In order to relax the assumption in \cite{ring}, the notion of a ``scaled" vector transport is introduced in this article and a new conjugate gradient algorithm is proposed with only a mild computational overhead per iteration. 

The organization of this paper is as follows:
The scaled vector transport is introduced in Section \ref{cg} after a brief review of some useful existing concepts.
How to compute the step size is also discussed in this section.
In Section \ref{cgr}, a brief review is made of the conjugate gradient method on a Riemannian manifold $M$,
and then a new algorithm is proposed, in which the scaled vector transport is applied
only if the vector transport increases the norm of the previous search direction.
In Section \ref{analysis}, the global convergence for the proposed algorithm is proved in a manner similar to the usual one performed on $\mathbb R^n$, where the scaled vector transport used on a fitting occasion makes a generated sequence into a globally convergent one.
Section \ref{experiments} provides numerical experiments on simple problems which the existing algorithm cannot solve efficiently but the proposed algorithm can do.
The numerical experiments show why the present algorithm can generate convergent sequences.
Section \ref{conclusion} includes concluding remarks.
It is shown in Appendix \ref{app} that the Lipschitzian condition referred to in Subsection \ref{zousec} is satisfied for some practical Riemannian optimization problems.

\section{Setup for Riemannian optimization}\label{cg}
\subsection{Retraction}\label{general}
An unconstrained optimization problem on a Riemannian manifold $M$ is described as follows:
\begin{Prob}\label{general_prob}
\begin{align}
{\rm minimize} \,\,\,\,\,& f(x),\\
{\rm subject\,\,to} \,\,\,\,\,& x\in M.
\end{align}
\end{Prob}
If $M$ is the Euclidean space $\mathbb R^n$, the line search is performed with the updating formula
\begin{equation}
x_{k+1}=x_k+\alpha_k\eta_k,\label{linesearch}
\end{equation}
where $x_k, x_{k+1}\in\mathbb R^n$ are a current point and an unknown next point, respectively, and where $\eta_k\in\mathbb R^n$ and $\alpha_k>0$ are a search direction at $x_k$ and a step size, respectively.
However, the line search \eqref{linesearch} does not make sense on a general manifold $M$.
In order to generalize the line search \eqref{linesearch} on $\mathbb R^n$ to that on $M$, the search direction $\eta_k$ should be taken as a tangent vector in $T_{x_k}M$,
and the addition in Eq.~\eqref{linesearch} should be replaced by another suitable operation.
A natural alternative to the line search is a search along the geodesic emanating from $x_k$ in the direction of $\eta_k$,
but the geodesic will cause computational difficulty 
except for a few particular manifolds where the geodesics admit a tractable closed-form expression.
A computationally efficient way is to use the following retraction map introduced in \cite{absil}.

\begin{Def}\label{retractiondef}
Let $M$ and $TM$ be a manifold and the tangent bundle of $M$, respectively.
Let $R:TM\to M$ be a smooth map and $R_x$ the restriction of $R$ to $T_xM$.
The $R$ is called a retraction on $M$, if it has the following properties:
\begin{enumerate}
\item $R_x(0_x)=x$, where $0_x$ denotes the zero element of $T_xM$.\label{ret1}
\item With the canonical identification $T_{0_x}T_xM\simeq T_xM$, $R_x$ satisfies
\begin{equation}
{\rm D}R_x(0_x)=\id_{T_xM},
\end{equation}
where $\D R_x(0_x)$ denotes the derivative of $R_x$ at $0_x$, and $\id_{T_xM}$ the identity map on $T_xM$.\label{ret2}
\end{enumerate}
\end{Def}

As is easily seen, the exponential map on $M$ is a typical example of a retraction.
If we can find a computationally preferable retraction, we can perform an optimization procedure as follows:
\begin{algorithm}[H]
\caption{The general framework of optimization methods for Problem \ref{general_prob} on a Riemannian manifold $M$}\label{algorithm1}
\begin{algorithmic}[1]
\STATE Choose an initial point $x_0\in M$.
\FOR{$k=0,1,2,\ldots$}
\STATE Compute the search direction $\eta_k\in T_{x_k}M$ and the step size $\alpha_k>0$.
\STATE Compute the next iterate by $x_{k+1}:=R_{x_k}(\alpha_k\eta_k)$, where $R$ is a retraction on $M$.
\ENDFOR
\end{algorithmic}
\end{algorithm}
The choice of a search direction and a step size characterizes the individual optimization method.
We proceed to the vector transport in search for computationally efficient conjugate gradient methods.

\subsection{Vector transport and scaled vector transport}\label{vectra}
In a (nonlinear) conjugate gradient method on the Euclidean space $\mathbb R^n$, the search directions $\eta_k$ are chosen to be
\begin{equation}
\eta_k=-\grad f(x_k)+\beta_{k}\eta_{k-1}, \qquad k\ge 0,\label{cg_euclid}
\end{equation}
where $\beta_0=0$, and where $\beta_k$ with $k\ge 1$ are determined in several possible manners.
For example, $\beta_k$ are determined by
\begin{equation}
\beta_{k}^{\FR}=\frac{\grad f(x_{k})^T\grad f(x_{k})}{\grad f(x_{k-1})^T\grad f(x_{k-1})},
\end{equation}
or
\begin{equation}
\beta_{k}^{\PR}=\frac{\grad f(x_{k})^T\left(\grad f(x_{k})-\grad f(x_{k-1})\right)}{\grad f(x_{k-1})^T\grad f(x_{k-1})},\label{PR_euclid}
\end{equation}
where FR and PR are abbreviations of Fletcher-Reeves and Polak-Ribi\`{e}re, respectively \cite{nocedal}.

However, if $\mathbb R^n$ is replaced by a Riemannian manifold $M$, $\grad f(x_k)\in T_{x_k}M$ and $\eta_{k-1}\in T_{x_{k-1}}M$ belong to different tangent spaces, so that $-\grad f(x_k)+\beta_{k}\eta_{k-1}$ in Eq.~\eqref{cg_euclid} does not make sense.
The quantity $\grad f(x_{k})-\grad f(x_{k-1})$ in Eq.~\eqref{PR_euclid} makes no sense on $M$ either.
In order to modify the vector addition in Eqs.~\eqref{cg_euclid} and \eqref{PR_euclid} into a suitable operation on $M$,
Smith proposed to use the parallel translation of tangent vectors along a geodesic \cite{smith}.
However, no computationally efficient formula is known for the parallel translation along a geodesic even for the Stiefel manifold
except when it reduces to the sphere or the orthogonal group.
Absil {\it et al.} \cite{absil} proposed the notion of a vector transport as an alternative to the parallel translation.
The vector transport is a generalization of the parallel translation and can enhance computational efficiency of algorithms, if defined suitably.

In this paper, we focus on the differentiated retraction $\mathcal{T}^R$ as a vector transport, which is defined to be
\begin{equation}
\mathcal{T}^R_{\eta_x}(\xi_x):=\D R_x(\eta_x)[\xi_x],\qquad \eta_x,\xi_x\in T_xM,\label{dr}
\end{equation}
where $R$ is a retraction on $M$.
We here note that $\mathcal{T}^R$ satisfies the conditions in the definition of a vector transport, as is easily verified \cite{absil}.

In what follows, we assume that $M$ is a Riemannian manifold and denote the Riemannian metric evaluated at $x\in M$ by $\langle\cdot,\cdot\rangle_x$.
The norm of a tangent vector $\xi_x\in T_x M$ evaluated at $x\in M$ is defined to be $\norm{\xi_x}_x=\sqrt{\langle\xi_x,\xi_x\rangle}$.
We here have to note that though the parallel translation is an isometry, a vector transport is not required to preserve the norm of vectors in general.
The differentiated retraction $\mathcal{T}^R$ is not always an isometry either.
In analysing the convergence for the conjugate gradient method later, it will be crucial whether the vector transport $\mathcal{T}^R$ increases the norm of vectors or not.
In order to prevent the vector transport $\mathcal{T}^R$ from increasing the norm of vectors,
we define the scaled vector transport $\mathcal{T}^0:TM\oplus TM\to TM$ associated with $\mathcal{T}^R$ as follows:
\begin{Def}\label{t0def}
Let $R$ be a retraction on a Riemannian manifold $M$.
Let $\mathcal{T}^R$ be a vector transport defined by \eqref{dr} with respect to $R$.
The scaled vector transport $\mathcal{T}^0$ associated with $\mathcal{T}^R$ is defined as
\begin{equation}
\mathcal{T}^0_{\eta_x}(\xi_x)=\frac{\norm{\xi_x}_x}{\norm{\mathcal{T}^R_{\eta_x}(\xi_x)}_{R_x(\eta_x)}}\mathcal{T}^R_{\eta_x}(\xi_x),\qquad \eta_x,\xi_x\in T_xM.\label{scaled}
\end{equation}
\end{Def}

The scaled vector transport $\mathcal{T}^0$ thus defined is no longer a vector transport since it is not linear.
However, $\mathcal{T}^0$ satisfies
\begin{equation}
\norm{\mathcal{T}^0_{\eta_x}(\xi_x)}_{R_x(\eta_x)}=\norm{\xi_x}_x,\qquad \eta_x,\xi_x\in T_xM,
\end{equation}
which is a key property for the global convergence of the algorithm we will propose.

\subsection{Strong Wolfe conditions}\label{Wolfe_Subsec}
In computing the step size $\alpha_k$ in the conjugate gradient method on $\mathbb R^n$, the strong Wolfe conditions are often used \cite{nocedal}, which require $\alpha_k$ to satisfy
\begin{eqnarray}
f(x_k+\alpha_k\eta_k)\le f(x_k)+c_1\alpha_k\grad f(x_k)^T\eta_k,\label{wolfe1}\\
\abs{\grad f\left(x_k+\alpha_k\eta_k\right)^T\eta_k}\le c_2\abs{\grad f(x_k)^T\eta_k},\label{wolfe2}
\end{eqnarray}
with $0<c_1<c_2<1$.
In particular, $c_1$ and $c_2$ are often taken so as to satisfy $0<c_1<c_2<1/2$ in the conjugate gradient method.
In order to extend the strong Wolfe conditions on $\mathbb R^n$ to those on $M$, we start by reviewing the strong Wolfe conditions \eqref{wolfe1} and \eqref{wolfe2}.
For a current point $x_k$ and a search direction $\eta_k$, one performs a line search for the function defined by
\begin{equation}
\phi(\alpha)=f(x_k+\alpha\eta_k),\qquad \alpha>0.\label{phialpha}
\end{equation}
Requiring $\alpha_k$ to give a sufficient decrease in the value of $f$, one imposes the condition
\begin{equation}
\phi(\alpha_k)\le \phi(0)+c_1\alpha_k \phi'(0),\label{wolfe01}
\end{equation}
which yields \eqref{wolfe1}.
In order to prevent $\alpha_k$ from being excessively short, the $\alpha_k$ is required to satisfy
\begin{equation}
\abs{\phi'(\alpha_k)}\le c_2\abs{\phi'(0)},\label{wolfe02}
\end{equation}
which implies \eqref{wolfe2}.

In order to generalize the strong Wolfe conditions to those on $M$, we define a function $\phi$ on $M$, in an analogous manner to \eqref{phialpha}, to be
\begin{equation}
\phi(\alpha)=f\left(R_{x_k}(\alpha\eta_k)\right),\qquad \alpha>0,\label{phi}
\end{equation}
where $R$ is a retraction on $M$.
The conditions \eqref{wolfe01} and \eqref{wolfe02} applied to \eqref{phi} give rise to
\begin{equation}
f\left(R_{x_k}(\alpha_k\eta_k)\right)\le f(x_k)+c_1\alpha_k\langle\grad f(x_k),\eta_k\rangle_{x_k},\label{wolfem1}
\end{equation}
\begin{equation}
\abs{\langle\grad f\left(R_{x_k}(\alpha_k\eta_k)\right),\D R_{x_k}\left(\alpha_k\eta_k\right)[\eta_k]\rangle_{R_{x_k}(\alpha_k\eta_k)}}\le c_2\abs{\langle\grad f(x_k),\eta_k\rangle_{x_k}},\label{wolfem2}
\end{equation}
respectively, where $0<c_1<c_2<1$.
We call the conditions \eqref{wolfem1} and \eqref{wolfem2} the strong Wolfe conditions.
The existence of a step size satisfying \eqref{wolfem1} and \eqref{wolfem2} can be shown by an almost verbatim repetition of that for the strong Wolfe conditions on $\mathbb R^n$ (see \cite{nocedal}).

\begin{Prop}\label{wolfeprop}
Let $M$ be a Riemannian manifold with a retraction $R$.
If a smooth objective function $f$ on $M$ is bounded below on $\left\{R_{x_k}(\alpha\eta_k)|\alpha>0\right\}$ for $x_k\in M$ and for a descent direction $\eta_k\in T_{x_k} M$,
and if constants $c_1$ and $c_2$ satisfy $0<c_1<c_2<1$, then there exists a step size $\alpha_k$ which satisfies the strong Wolfe conditions \eqref{wolfem1} and \eqref{wolfem2}.
\end{Prop}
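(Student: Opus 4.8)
The plan is to reduce the statement to the classical one-dimensional lemma on $\mathbb R$ (see \cite{nocedal}) by working with the scalar function $\phi$ defined in \eqref{phi}. First I would note that $\phi(\alpha)=f\left(R_{x_k}(\alpha\eta_k)\right)$ is continuously differentiable on $[0,\infty)$, since $f$ and $R$ are smooth, and that by the chain rule together with the definition of the Riemannian gradient,
\begin{equation}
\phi'(\alpha)=\langle\grad f\left(R_{x_k}(\alpha\eta_k)\right),\D R_{x_k}(\alpha\eta_k)[\eta_k]\rangle_{R_{x_k}(\alpha\eta_k)}.
\end{equation}
Evaluating at $\alpha=0$ and using the second defining property of a retraction, $\D R_{x_k}(0_{x_k})=\id_{T_{x_k}M}$ (item \ref{ret2} of Definition \ref{retractiondef}), gives $\phi'(0)=\langle\grad f(x_k),\eta_k\rangle_{x_k}<0$ because $\eta_k$ is a descent direction. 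Moreover $\phi$ is bounded below on $(0,\infty)$ by hypothesis. With these identifications, the strong Wolfe conditions \eqref{wolfem1} and \eqref{wolfem2} for a candidate $\alpha_k$ are exactly \eqref{wolfe01} and \eqref{wolfe02} for $\phi$, so it remains to prove the existence of a step size for a $C^1$ function $\phi$ with $\phi'(0)<0$ that is bounded below.

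Next I would run the textbook argument. Since $\phi'(0)<0$, the affine function $\ell(\alpha):=\phi(0)+c_1\alpha\phi'(0)$ is strictly decreasing and tends to $-\infty$, whereas $\phi$ is bounded below; hence $\phi$ and $\ell$ must meet at some positive argument. Let $\alpha^{*}$ be the smallest such; it is positive because $\phi'(0)<c_1\phi'(0)=\ell'(0)$ forces $\phi(\alpha)<\ell(\alpha)$ for small $\alpha>0$, and by continuity $\phi(\alpha^{*})=\ell(\alpha^{*})$ with $\phi(\alpha)<\ell(\alpha)$ on $(0,\alpha^{*})$, so the sufficient decrease condition holds for every $\alpha\in(0,\alpha^{*}]$. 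Applying the mean value theorem to $\phi$ on $[0,\alpha^{*}]$ yields $\alpha^{**}\in(0,\alpha^{*})$ with
\begin{equation}
\phi'(\alpha^{**})=\frac{\phi(\alpha^{*})-\phi(0)}{\alpha^{*}}=c_1\phi'(0),
\end{equation}
so $\abs{\phi'(\alpha^{**})}=c_1\abs{\phi'(0)}<c_2\abs{\phi'(0)}$; since also $\alpha^{**}<\alpha^{*}$, the step size $\alpha_k:=\alpha^{**}$ satisfies \eqref{wolfe01} and \eqref{wolfe02}, hence \eqref{wolfem1} and \eqref{wolfem2}. (Continuity of $\phi$ and $\phi'$ then shows that a whole open interval of step sizes around $\alpha^{**}$ works, if desired.)

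I do not expect a genuine obstacle here, consistent with the remark preceding the statement. The only point requiring real care is the first step: checking that the derivative $\phi'(\alpha)$ is precisely the pairing $\langle\grad f\left(R_{x_k}(\alpha\eta_k)\right),\D R_{x_k}(\alpha\eta_k)[\eta_k]\rangle_{R_{x_k}(\alpha\eta_k)}$ appearing in \eqref{wolfem2}, and that at $\alpha=0$ it collapses to $\langle\grad f(x_k),\eta_k\rangle_{x_k}$ by the retraction property \ref{ret2}. Once this bridge between $\phi$ and the manifold quantities is established, the remainder is the $\mathbb R^n$ proof applied verbatim to $\phi$.
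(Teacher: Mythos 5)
Your proof is correct and follows exactly the route the paper intends: the paper gives no proof beyond noting that the existence argument is "an almost verbatim repetition" of the Euclidean one in \cite{nocedal}, and your reduction to the scalar function $\phi$ via the chain rule and the retraction property $\D R_{x_k}(0_{x_k})=\id_{T_{x_k}M}$, followed by the standard intersection-plus-mean-value-theorem argument, is precisely that repetition carried out in detail.
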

We note that the strong Wolfe conditions \eqref{wolfem1} and \eqref{wolfem2} together with the existence of a step size satisfying them are also discussed in \cite{ring}.

We now look into the second condition \eqref{wolfem2}.
If we introduce a vector transport $\mathcal{T}^R$ as the differentiated retraction given by \eqref{dr},
then Eq.~\eqref{wolfem2} can be expressed as
\begin{equation}
\abs{\langle\grad f\left(R_{x_k}(\alpha_k\eta_k)\right),\mathcal{T}^R_{\alpha_k\eta_k}(\eta_k)\rangle_{R_{x_k}(\alpha_k\eta_k)}}\le c_2\abs{\langle\grad f(x_k),\eta_k\rangle_{x_k}}.\label{wolfem3}
\end{equation}
An idea for further generalization of this condition to that in an algorithm with a general vector transport $\mathcal{T}$ is to replace \eqref{wolfem3} by
\begin{equation}
\abs{\langle\grad f\left(R_{x_k}(\alpha_k\eta_k)\right),\mathcal{T}_{\alpha_k\eta_k}(\eta_k)\rangle_{R_{x_k}(\alpha_k\eta_k)}}\le c_2\abs{\langle\grad f(x_k),\eta_k\rangle_{x_k}}.\label{wolfem4}
\end{equation}
However, if $\mathcal{T}\neq \mathcal{T}^R$, the existence of a step size satisfying both \eqref{wolfem1} and \eqref{wolfem4} is unclear in general.
In view of this, the differentiated retraction $\mathcal{T}^R$ is considered to be a natural choice of a vector transport $\mathcal{T}$,
for which a step size satisfying \eqref{wolfem1} and \eqref{wolfem4} is shown to exist.
In what follows, we use the differentiated retraction $\mathcal{T}^R$ and the scaled one $\mathcal{T}^0$.

\section{A new conjugate gradient method on a Riemannian manifold}\label{cgr}
If a Riemannian manifold $M$ is given a retraction $R$ and the corresponding vector transport $\mathcal{T}^R$,
a standard Fletcher-Reeves type conjugate gradient method on $M$ is described as follows \cite{absil, ring}:
\begin{algorithm}[H]
\caption{A standard Fletcher-Reeves type conjugate gradient method for Problem \ref{general_prob} on a Riemannian manifold $M$}
\begin{algorithmic}[1]\label{CGM}
\STATE Choose an initial point $x_0\in M$.
\STATE Set $\eta_0=-\grad f(x_0)$.
\FOR{$k=0,1,2,\ldots$}
\STATE Compute the step size $\alpha_k>0$ satisfying the strong Wolfe conditions \eqref{wolfem1} and \eqref{wolfem2} with $0<c_1<c_2<1/2$.
Set
\begin{equation}
x_{k+1}=R_{x_k}\left(\alpha_k\eta_k\right),
\end{equation}
where $R$ is a retraction on $M$.
\STATE Set
\begin{equation}
\beta_{k+1}=\frac{\norm{\grad f(x_{k+1})}_{x_{k+1}}^2}{\norm{\grad f(x_k)}_{x_k}^2},\label{beta0}
\end{equation}
\begin{equation}
\eta_{k+1}=-\grad f(x_{k+1})+\beta_{k+1}\mathcal{T}^R_{\alpha_k\eta_k}(\eta_k),\label{eta0}
\end{equation}
where $\mathcal{T}^R$ is the differentiated retraction defined by \eqref{dr}.
\ENDFOR
\end{algorithmic}
\end{algorithm}

In \cite{ring}, the convergence property of Algorithm \ref{CGM} is verified under the assumption that the inequality
\begin{equation}
\norm{\mathcal{T}^R_{\alpha_k\eta_k}(\eta_k)}_{x_{k+1}}\le\norm{\eta_k}_{x_k}\label{normineq}
\end{equation}
holds for all $k\in\mathbb N$.
However, the assumption does not always hold in general.
For example, the assumption does not hold on the sphere endowed with the orthographic retraction \cite{absil_malick}.
In Section \ref{experiments}, we will numerically treat such a case.

We wish to relax the assumption \eqref{normineq} by using a scaled vector transport.
An idea for improving Algorithm \ref{CGM} is to replace $\mathcal{T}^R$ by the scaled vector transport $\mathcal{T}^0$ defined by \eqref{scaled}.
However, this causes difficulty in computing effectively a step size $\alpha_k$ satisfying \eqref{wolfem4} with $\mathcal{T}=\mathcal{T}^0$.

A simple but effective idea for improving Algorithm \ref{CGM} is that each step size is always computed so as to satisfy the strong Wolfe conditions \eqref{wolfem1} and \eqref{wolfem2},
but the scaled vector transport $\mathcal{T}^0$ is adopted if it is necessary for the purpose of convergence.
More specifically, we use the scaled vector transport $\mathcal{T}^0$ only if the vector transport $\mathcal{T}^R$ increases the norm of the previous search direction vector, that is, we introduce $\mathcal{T}^{(k)}$ defined by
\begin{equation}
\mathcal{T}^{(k)}_{\alpha_k\eta_k}(\eta_k)=\begin{cases}
\mathcal{T}^R_{\alpha_k\eta_k}(\eta_k),\qquad \text{if}\ \ \norm{\mathcal{T}^R_{\alpha_k\eta_k}(\eta_k)}_{x_{k+1}}\le \norm{\eta_k}_{x_k},
\\
\mathcal{T}^{0}_{\alpha_k\eta_k}(\eta_k),\qquad \text{otherwise},
\end{cases}\label{tkdef}
\end{equation}
as a substitute for $\mathcal{T}^R$ in Step 5 of Algorithm \ref{CGM}.
This idea is realized in the following algorithm.

\begin{algorithm}[H]
\caption{A scaled Fletcher-Reeves type conjugate gradient method for Problem \ref{general_prob} on a Riemannian manifold $M$}
\begin{algorithmic}[1]\label{CGMTk}
\STATE Choose an initial point $x_0\in M$.
\STATE Set $\eta_0=-\grad f(x_0)$.
\FOR{$k=0,1,2,\ldots$}
\STATE Compute the step size $\alpha_k>0$ satisfying the strong Wolfe conditions \eqref{wolfem1} and \eqref{wolfem2} with $0<c_1<c_2<1/2$.
Set
\begin{equation}
x_{k+1}=R_{x_k}\left(\alpha_k\eta_k\right),
\end{equation}
where $R$ is a retraction on $M$.
\STATE Set
\begin{equation}
\beta_{k+1}=\frac{\norm{\grad f(x_{k+1})}_{x_{k+1}}^2}{\norm{\grad f(x_k)}_{x_k}^2},\label{beta}
\end{equation}
\begin{equation}
\eta_{k+1}=-\grad f(x_{k+1})+\beta_{k+1}\mathcal{T}^{(k)}_{\alpha_k\eta_k}(\eta_k),\label{tkeq}
\end{equation}
where $\mathcal{T}^{(k)}$ is defined by \eqref{tkdef}, and where $\mathcal{T}^R$ and $\mathcal{T}^0$ are the differentiated retraction and the associated scaled vector transport defined by \eqref{dr} and \eqref{scaled}, respectively.
\ENDFOR
\end{algorithmic}
\end{algorithm}
We will prove in Section \ref{analysis} the global convergence property of the proposed algorithm,
and give in Section \ref{experiments} numerical examples in which the inequality \eqref{normineq} does not hold for all $k\in\mathbb N$ but our Algorithm \ref{CGMTk} indeed has an advantage in generating convergent sequences.

\section{Convergence analysis of the new algorithm}\label{analysis}
In this section, we verify the convergence property of Algorithm \ref{CGMTk}.

\subsection{Zoutendijk's theorem}\label{zousec}
Zoutendijk's theorem about a series associated with search directions on $\mathbb R^n$ is not only valid for the conjugate gradient method but also valid for general descent algorithms \cite{nocedal}.
This theorem can be generalized so as to be applicable to a general descent algorithm (Algorithm \ref{algorithm1}) on a Riemannian manifold $M$.
In the same manner as in $\mathbb R^n$, we define on a Riemannian manifold $M$ the angle $\theta_k$ between the steepest descent direction $-\grad f(x_k)$ and the search direction $\eta_k$ through
\begin{equation}
\cos\theta_k=-\frac{\langle\grad f(x_k),\eta_k\rangle_{x_k}}{\norm{\grad f(x_k)}_{x_k}\norm{\eta_k}_{x_k}}.\label{cosdef}
\end{equation}
Then, Zoutendijk's theorem on $M$ is stated as follows:
\begin{Thm}\label{zoutendijk}
Suppose that in Algorithm \ref{algorithm1} on a Riemannian manifold $M$,
a descent direction $\eta_k$ and a step size $\alpha_k$ satisfy the strong Wolfe conditions \eqref{wolfem1} and \eqref{wolfem2}.
If the objective function $f$ is bounded below and of $C^1$-class, and if there exists a Lipschitzian constant $L>0$ such that 
\begin{equation}
\abs{\D(f\circ R_x)(t\eta)[\eta]-\D(f\circ R_x)(0)[\eta]}\le Lt, \qquad \eta\in T_xM\ {\rm with}\ \norm{\eta}_x=1,\ x\in M,\ t\ge 0,\label{lip}
\end{equation}
then the following series converges;
\begin{equation}
\sum_{k=0}^\infty \cos^2\theta_k\norm{\grad f(x_k)}_{x_k}^2<\infty.\label{zoucos}
\end{equation}
\end{Thm}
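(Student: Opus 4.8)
The plan is to mimic the classical $\mathbb{R}^n$ proof of Zoutendijk's theorem, transporting every step through the retraction. The central object is the scalar function $\phi_k(t) = f\bigl(R_{x_k}(t\eta_k)\bigr)$, which is exactly the pullback of $f$ along the retraction curve. First I would observe that $\phi_k'(0) = \langle \grad f(x_k), \eta_k\rangle_{x_k}$ by property \ref{ret2} of the retraction (the derivative of $R_{x_k}$ at $0_{x_k}$ is the identity), and more generally $\phi_k'(t) = \D(f\circ R_{x_k})(t\eta_k)[\eta_k]$. With this notation the strong Wolfe condition \eqref{wolfem2} at the accepted step size reads $|\phi_k'(\alpha_k)| \le c_2 |\phi_k'(0)|$, and the sufficient-decrease condition \eqref{wolfem1} reads $\phi_k(\alpha_k) \le \phi_k(0) + c_1 \alpha_k \phi_k'(0)$.

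Next I would derive the key one-step decrease estimate. Subtracting $\phi_k'(0)$ from both sides of the curvature condition and using $\phi_k'(0) < 0$ (descent direction), one gets $\phi_k'(\alpha_k) - \phi_k'(0) \ge (c_2 - 1)\phi_k'(0) = (1-c_2)|\phi_k'(0)|$. On the other hand, the Lipschitz hypothesis \eqref{lip}, applied to the unit vector $\eta_k/\norm{\eta_k}_{x_k}$ and rescaled, yields $\phi_k'(\alpha_k) - \phi_k'(0) \le L\,\alpha_k \norm{\eta_k}_{x_k}^2$. Combining these two inequalities bounds the step size from below: $\alpha_k \ge \dfrac{(1-c_2)\,|\phi_k'(0)|}{L\,\norm{\eta_k}_{x_k}^2}$. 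Substituting this lower bound into the sufficient-decrease condition \eqref{wolfem1} gives
\begin{equation}
f(x_{k+1}) \le f(x_k) - \frac{c_1(1-c_2)}{L}\cdot\frac{\langle\grad f(x_k),\eta_k\rangle_{x_k}^2}{\norm{\eta_k}_{x_k}^2},\nonumber
\end{equation}
and by the definition \eqref{cosdef} of $\theta_k$ the fraction on the right equals $\cos^2\theta_k\,\norm{\grad f(x_k)}_{x_k}^2$. Finally, summing over $k$ and telescoping the left side, the boundedness of $f$ below forces the series \eqref{zoucos} to converge, with the constant $c = c_1(1-c_2)/L$.

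The step I expect to require the most care is the rescaling argument that converts the Lipschitz bound \eqref{lip}, stated only for unit tangent vectors, into the estimate $\phi_k'(\alpha_k) - \phi_k'(0) \le L\alpha_k\norm{\eta_k}_{x_k}^2$ for a general $\eta_k$. Writing $\eta_k = \norm{\eta_k}_{x_k}\,u_k$ with $\norm{u_k}_{x_k}=1$ and $t = \alpha_k\norm{\eta_k}_{x_k}$, one has $t u_k = \alpha_k\eta_k$, and by linearity of the derivative in the bracketed argument $\D(f\circ R_{x_k})(tu_k)[\eta_k] = \norm{\eta_k}_{x_k}\,\D(f\circ R_{x_k})(tu_k)[u_k]$; applying \eqref{lip} to $u_k$ at parameter $t$ and multiplying by $\norm{\eta_k}_{x_k}$ produces the claimed bound. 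This is routine once the bookkeeping is set up, but it is the one place where the precise form of the hypothesis matters. The rest of the argument is a verbatim transcription of the Euclidean proof in \cite{nocedal}, with $x_k + t\eta_k$ replaced throughout by $R_{x_k}(t\eta_k)$; notably, the vector transport and the scaled vector transport play no role whatsoever in this theorem, since it concerns only descent directions and step sizes, not how $\eta_{k+1}$ is built.
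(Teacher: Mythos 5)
Your proof is correct and follows exactly the route the paper intends: the paper itself omits the argument, saying only that it proceeds "in the same manner as that for Zoutendijk's theorem on $\mathbb R^n$" (citing Ring and Wirth), and the appendix explicitly confirms that \eqref{lip} is used precisely in the rescaled form $\D(f\circ R_{x_k})(\alpha_k\eta_k)[\eta_k]-\D(f\circ R_{x_k})(0)[\eta_k]\le \alpha_k L\norm{\eta_k}_{x_k}^2$, which is the one step you rightly flagged as needing care. Your observation that the (scaled) vector transport plays no role here is also accurate, since the theorem concerns only the general descent framework of Algorithm \ref{algorithm1}.
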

The proof of this theorem can be performed in the same manner as that for Zoutendijk's theorem on $\mathbb R^n$.
See \cite{ring} for more detail.

\begin{Remark}
We remark that the inequality \eqref{lip} is a weaker condition than the Lipschitz continuous differentiability of $f\circ R_x$.
We will show in Appendix \ref{app} that Eq.~\eqref{lip} holds for objective functions in practical Riemannian optimization problems.
A further discussion on the relation with the standard Lipschitz continuous differentiability will be also made in the same appendix.
\end{Remark}

\subsection{Global convergence}
We first extend a lemma in \cite{al} so as to be applicable to Algorithm \ref{CGMTk} as follows:

\begin{Lemma}\label{Lemma}
The search direction $\eta_k$ determined in Algorithm \ref{CGMTk} is a descent direction satisfying
\begin{equation}
-\frac{1}{1-c_2}\le \frac{\langle\grad f(x_k),\eta_k\rangle_{x_k}}{\norm{\grad f(x_k)}_{x_k}^2}\le \frac{2c_2-1}{1-c_2}.\label{lemma}
\end{equation}
\end{Lemma}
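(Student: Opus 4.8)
The plan is to imitate the classical Fletcher--Reeves induction argument on $\mathbb{R}^n$ (as in Al-Baali's lemma, cited as \cite{al}), but carrying along the extra bookkeeping forced by the case-dependent transport $\mathcal{T}^{(k)}$. I would argue by induction on $k$, the induction hypothesis being precisely the two-sided bound \eqref{lemma} at step $k$ (which in particular makes $\langle\grad f(x_k),\eta_k\rangle_{x_k}<0$, i.e.\ $\eta_k$ is a descent direction, since $2c_2-1<0$ for $c_2<1/2$). The base case $k=0$ is immediate from $\eta_0=-\grad f(x_0)$: the ratio in \eqref{lemma} equals $-1$, and one checks $-1/(1-c_2)\le -1\le (2c_2-1)/(1-c_2)$ holds for $0<c_2<1/2$.

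For the inductive step, assume \eqref{lemma} holds at $k$. First I would record the key norm bound on the transported direction: by the definition \eqref{tkdef} of $\mathcal{T}^{(k)}$, together with the scaling identity $\norm{\mathcal{T}^0_{\alpha_k\eta_k}(\eta_k)}_{x_{k+1}}=\norm{\eta_k}_{x_k}$ from Definition \ref{t0def}, we have in \emph{both} branches
\[
\norm{\mathcal{T}^{(k)}_{\alpha_k\eta_k}(\eta_k)}_{x_{k+1}}\le\norm{\eta_k}_{x_k}.
\]
This is exactly the inequality that the unscaled algorithm had to \emph{assume} in \eqref{normineq}; here it holds unconditionally, and this is the crux of why the argument goes through. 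Next, take the inner product of \eqref{tkeq} with $\grad f(x_{k+1})$ to obtain
\[
\langle\grad f(x_{k+1}),\eta_{k+1}\rangle_{x_{k+1}}
=-\norm{\grad f(x_{k+1})}_{x_{k+1}}^2+\beta_{k+1}\langle\grad f(x_{k+1}),\mathcal{T}^{(k)}_{\alpha_k\eta_k}(\eta_k)\rangle_{x_{k+1}}.
\]
Divide by $\norm{\grad f(x_{k+1})}_{x_{k+1}}^2$ and substitute $\beta_{k+1}=\norm{\grad f(x_{k+1})}_{x_{k+1}}^2/\norm{\grad f(x_k)}_{x_k}^2$ from \eqref{beta}; the task reduces to bounding the cross term
\[
\frac{\langle\grad f(x_{k+1}),\mathcal{T}^{(k)}_{\alpha_k\eta_k}(\eta_k)\rangle_{x_{k+1}}}{\norm{\grad f(x_k)}_{x_k}^2}.
\]
For the unscaled branch, $\mathcal{T}^{(k)}=\mathcal{T}^R=\D R_{x_k}(\alpha_k\eta_k)[\,\cdot\,]$, so the numerator is exactly the quantity controlled by the second strong Wolfe condition \eqref{wolfem2}/\eqref{wolfem3}, giving $|\langle\grad f(x_{k+1}),\mathcal{T}^R_{\alpha_k\eta_k}(\eta_k)\rangle_{x_{k+1}}|\le c_2|\langle\grad f(x_k),\eta_k\rangle_{x_k}|$. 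For the scaled branch, $\mathcal{T}^0$ differs from $\mathcal{T}^R$ only by a positive scalar factor $\norm{\eta_k}_{x_k}/\norm{\mathcal{T}^R_{\alpha_k\eta_k}(\eta_k)}_{x_{k+1}}$ which is $\le 1$ precisely in this branch, so the same bound $|\langle\grad f(x_{k+1}),\mathcal{T}^0_{\alpha_k\eta_k}(\eta_k)\rangle_{x_{k+1}}|\le c_2|\langle\grad f(x_k),\eta_k\rangle_{x_k}|$ holds a fortiori. Then $|\langle\grad f(x_k),\eta_k\rangle_{x_k}|/\norm{\grad f(x_k)}_{x_k}^2\le 1/(1-c_2)$ by the induction hypothesis \eqref{lemma}, so the cross term is bounded in absolute value by $c_2/(1-c_2)$. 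Plugging back:
\[
-1-\frac{c_2}{1-c_2}\le\frac{\langle\grad f(x_{k+1}),\eta_{k+1}\rangle_{x_{k+1}}}{\norm{\grad f(x_{k+1})}_{x_{k+1}}^2}\le -1+\frac{c_2}{1-c_2},
\]
and simplifying the two endpoints gives exactly $-1/(1-c_2)$ and $(2c_2-1)/(1-c_2)$, closing the induction; descent at $k+1$ follows since the upper endpoint is negative.

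The only genuinely delicate point — the rest being routine algebra — is handling the scaled branch uniformly: one must observe that $\mathcal{T}^0$ is a \emph{positive} rescaling of $\mathcal{T}^R$, so the Wolfe bound \eqref{wolfem2} written in terms of $\mathcal{T}^R$ transfers to $\mathcal{T}^{(k)}$ with the same constant $c_2$ in both cases. In other words, the step size $\alpha_k$ is still computed via $\mathcal{T}^R$ (as the algorithm specifies), yet the bound needed for the lemma survives the switch to $\mathcal{T}^0$ because the switch only shrinks the vector. I expect no further obstacles; the argument is a faithful Riemannian transcription of Al-Baali's lemma, with the scaled transport supplying for free the norm non-increase that was an external hypothesis in the earlier analysis of \cite{ring}.
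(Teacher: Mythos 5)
Your proof is correct and follows essentially the same route as the paper: induction on $k$, with the observation that in either branch $\mathcal{T}^{(k)}_{\alpha_k\eta_k}(\eta_k)$ is a positive rescaling of $\mathcal{T}^R_{\alpha_k\eta_k}(\eta_k)$ by a factor at most $1$, so the strong Wolfe bound \eqref{wolfem2} transfers to the cross term $\langle\grad f(x_{k+1}),\mathcal{T}^{(k)}_{\alpha_k\eta_k}(\eta_k)\rangle_{x_{k+1}}$, after which the $\beta_{k+1}$ cancellation and the induction hypothesis close the argument exactly as in the paper.
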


\begin{proof}
The proof runs by induction.
For $k=0$, the inequality \eqref{lemma} clearly holds on account of
\begin{equation}
\frac{\langle\grad f(x_0),\eta_0\rangle_{x_0}}{\norm{\grad f(x_0)}_{x_0}^2}=\frac{\langle\grad f(x_0),-\grad f(x_0)\rangle_{x_0}}{\norm{\grad f(x_0)}_{x_0}^2}=-1.
\end{equation}
We here note that $0<c_1<c_2<1/2$.
Suppose that $\eta_k$ is a descent direction satisfying \eqref{lemma} for some $k$.
Note that on account of Eq.~\eqref{tkeq} with Eq.~\eqref{tkdef},
$\mathcal{T}^R$ and $\mathcal{T}^{(k)}$ are related by $\norm{\mathcal{T}^{(k)}_{\alpha_k\eta_k}(\eta_k)}_{x_{k+1}}\le \norm{\mathcal{T}^R_{\alpha_k\eta_k}(\eta_k)}_{x_{k+1}}$ in each case.
Since $\mathcal{T}_{\alpha_k\eta_k}^{(k)}(\eta_k)$ and $\mathcal{T}^R_{\alpha_k\eta_k}(\eta_k)$ are in the same direction with the inequality $\norm{\mathcal{T}_{\alpha_k\eta_k}^{(k)}(\eta_k)}_{x_{k+1}}\le \norm{\mathcal{T}^R_{\alpha_k\eta_k}(\eta_k)}_{x_{k+1}}$ in norm, we have
\begin{equation}
\abs{\langle\grad f(x_{k+1}),\mathcal{T}^{(k)}_{\alpha_k\eta_k}(\eta_k)\rangle_{x_{k+1}}}\le\abs{\langle\grad f(x_{k+1}),\mathcal{T}^R_{\alpha_k\eta_k}(\eta_k)\rangle_{x_{k+1}}}.\label{tkandt}
\end{equation}
We also note that the vector transport $\mathcal{T}^R$ is defined to be $\mathcal{T}^R_{\eta_x}(\xi_x)=\D R_x(\eta_x)[\xi_x]$ in the algorithm.
It then follows from \eqref{wolfem2} and \eqref{tkandt} that
\begin{equation}
c_2\langle\grad f(x_k),\eta_k\rangle_{x_k}\le \langle\grad f(x_{k+1}),\mathcal{T}^{(k)}_{\alpha_k\eta_k}(\eta_k)\rangle_{x_{k+1}}\le -c_2\langle\grad f(x_k),\eta_k\rangle_{x_k},\label{lemma1}
\end{equation}
where it is to be noted that $\eta_k$ is in a descent direction.
The middle term in \eqref{lemma} with $k+1$ for $k$ is computed as
\begin{align}
\frac{\langle\grad f(x_{k+1}),\eta_{k+1}\rangle_{x_{k+1}}}{\norm{\grad f(x_{k+1})}_{x_{k+1}}^2}
=&\frac{\langle\grad f(x_{k+1}),-\grad f(x_{k+1})+\beta_{k+1}\mathcal{T}^{(k)}_{\alpha_k\eta_k}(\eta_k)\rangle_{x_{k+1}}}{\norm{\grad f(x_{k+1})}_{x_{k+1}}^2}\notag\\
=&-1+\frac{\langle\grad f(x_{k+1}),\mathcal{T}^{(k)}_{\alpha_k\eta_k}(\eta_k)\rangle_{x_{k+1}}}{\norm{\grad f(x_k)}_{x_k}^2},\label{lemma2}
\end{align}
where the definition \eqref{beta} of $\beta_{k+1}$ has been used.
Therefore, we obtain from \eqref{lemma1} and \eqref{lemma2}
\begin{equation}
-1+c_2\frac{\langle\grad f(x_k),\eta_k\rangle_{x_k}}{\norm{\grad f(x_k)}_{x_k}^2}\le \frac{\langle\grad f(x_{k+1}),\eta_{k+1}\rangle_{x_{k+1}}}{\norm{\grad f(x_{k+1})}_{x_{k+1}}^2}\le -1-c_2\frac{\langle\grad f(x_k),\eta_k\rangle_{x_k}}{\norm{\grad f(x_k)}_{x_k}^2}.
\end{equation}
The inequality \eqref{lemma} for $k+1$ immediately follows from the induction hypothesis.
\end{proof}

We proceed to the global convergence property of Algorithm \ref{CGMTk}.
The convergence of the conjugate gradient method has been already proved on $\mathbb R^n$ by Al-Baali \cite{al}.
Exploiting the idea of the proof used in \cite{al}, we show that Algorithm \ref{CGMTk} generates converging sequences on a Riemannian manifold.

\begin{Thm}\label{thm_glocon}
Consider Algorithm \ref{CGMTk}.
If \eqref{lip} and hence \eqref{zoucos} hold, then
\begin{equation}
\liminf_{k\to\infty}\norm{\grad f(x_k)}_{x_k}=0.\label{glocon}
\end{equation}
\end{Thm}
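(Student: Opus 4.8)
The plan is to argue by contradiction, adapting Al-Baali's proof on $\mathbb{R}^n$ (as in \cite{al}), with the crucial new input that the modified transport $\mathcal{T}^{(k)}$ never increases norm \emph{by construction}. Suppose \eqref{glocon} fails; then there is $\gamma>0$ with $\norm{\grad f(x_k)}_{x_k}\ge\gamma$ for all $k$. First I would invoke Lemma \ref{Lemma}: since $0<c_2<1/2$, the upper bound in \eqref{lemma} is negative, so $-\langle\grad f(x_k),\eta_k\rangle_{x_k}\ge\frac{1-2c_2}{1-c_2}\norm{\grad f(x_k)}_{x_k}^2$; substituting this into the definition \eqref{cosdef} of $\cos\theta_k$ gives
\[
\cos^2\theta_k\,\norm{\grad f(x_k)}_{x_k}^2 \;\ge\; \Bigl(\tfrac{1-2c_2}{1-c_2}\Bigr)^2\frac{\norm{\grad f(x_k)}_{x_k}^4}{\norm{\eta_k}_{x_k}^2}.
\]
Combined with Zoutendijk's theorem (Theorem \ref{zoutendijk}, i.e.\ \eqref{zoucos}), this yields $\sum_{k}\norm{\grad f(x_k)}_{x_k}^4/\norm{\eta_k}_{x_k}^2<\infty$.

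The main step is then to bound $\norm{\eta_k}_{x_k}^2$ from above by a quantity growing at most linearly in $k$. Here the definition \eqref{tkdef} of $\mathcal{T}^{(k)}$ does the essential work: in both branches one has $\norm{\mathcal{T}^{(k)}_{\alpha_k\eta_k}(\eta_k)}_{x_{k+1}}\le\norm{\eta_k}_{x_k}$ --- this is precisely the inequality \eqref{normineq} that had to be \emph{assumed} in \cite{ring}, and it is now automatic. Rewriting \eqref{tkeq} as $\eta_{k+1}+\grad f(x_{k+1})=\beta_{k+1}\mathcal{T}^{(k)}_{\alpha_k\eta_k}(\eta_k)$, taking squared norms, using this norm bound together with the definition \eqref{beta} of $\beta_{k+1}$, and estimating $-2\langle\grad f(x_{k+1}),\eta_{k+1}\rangle_{x_{k+1}}$ from above via the lower bound $-\tfrac{1}{1-c_2}$ in \eqref{lemma}, I would obtain the recursion
\[
\norm{\eta_{k+1}}_{x_{k+1}}^2\;\le\; \frac{\norm{\grad f(x_{k+1})}_{x_{k+1}}^4}{\norm{\grad f(x_k)}_{x_k}^4}\,\norm{\eta_k}_{x_k}^2+\frac{1+c_2}{1-c_2}\,\norm{\grad f(x_{k+1})}_{x_{k+1}}^2.
\]

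Dividing by $\norm{\grad f(x_{k+1})}_{x_{k+1}}^4$, iterating from $0$ to $k$, and using $\eta_0=-\grad f(x_0)$ (so that the base term $\norm{\eta_0}_{x_0}^2/\norm{\grad f(x_0)}_{x_0}^4=\norm{\grad f(x_0)}_{x_0}^{-2}$ is absorbed because $\tfrac{1+c_2}{1-c_2}>1$), one gets
\[
\frac{\norm{\eta_k}_{x_k}^2}{\norm{\grad f(x_k)}_{x_k}^4}\;\le\;\frac{1+c_2}{1-c_2}\sum_{j=0}^{k}\frac{1}{\norm{\grad f(x_j)}_{x_j}^2}\;\le\;\frac{1+c_2}{1-c_2}\cdot\frac{k+1}{\gamma^2},
\]
whence $\norm{\grad f(x_k)}_{x_k}^4/\norm{\eta_k}_{x_k}^2\ge\frac{(1-c_2)\gamma^2}{(1+c_2)(k+1)}$. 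Since $\sum_k 1/(k+1)$ diverges, this contradicts the finiteness of $\sum_k\norm{\grad f(x_k)}_{x_k}^4/\norm{\eta_k}_{x_k}^2$ from the first step, so \eqref{glocon} must hold.

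Once Lemma \ref{Lemma} is available the argument is largely bookkeeping; the genuinely new point, compared with the Euclidean case or the $\mathcal{T}^R$-case with the extra hypothesis \eqref{normineq}, is the observation that $\mathcal{T}^{(k)}$ is norm-non-increasing \emph{automatically}, which is exactly what closes the recursion for $\norm{\eta_k}_{x_k}$ without assuming \eqref{normineq}. I expect the only care needed is with the signs in expanding $\norm{\eta_{k+1}+\grad f(x_{k+1})}_{x_{k+1}}^2$ and in verifying that $\tfrac{1+c_2}{1-c_2}>1$ so the summed recursion starts correctly.
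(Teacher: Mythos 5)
Your proposal is correct and follows essentially the same route as the paper's proof: a contradiction argument combining Lemma \ref{Lemma}, Zoutendijk's theorem, the automatic norm non-increase of $\mathcal{T}^{(k)}$, and the recursion $\norm{\eta_{k+1}}_{x_{k+1}}^2\le\beta_{k+1}^2\norm{\eta_k}_{x_k}^2+\frac{1+c_2}{1-c_2}\norm{\grad f(x_{k+1})}_{x_{k+1}}^2$, leading to the divergent harmonic sum; your derivation of the recursion (rearranging the update and using the lower bound in \eqref{lemma} directly, rather than bounding the cross term via \eqref{wolfem2} and \eqref{tkandt}) is only a cosmetic variation yielding the same constant $c=(1+c_2)/(1-c_2)$. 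The only omission is the trivial preliminary case in which $\grad f(x_{k_0})=0$ for some $k_0$ (so the iterates stagnate and \eqref{glocon} holds outright), which the paper dispatches first so that the contradiction hypothesis indeed gives a uniform $\gamma>0$ for all $k$.
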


\begin{proof}
If $\grad f(x_k)=0$ for some $k$, let $k_0$ be the smallest integer among such $k$.
Then, we have $\beta_{k_0}=0$ and $\eta_{k_0}=0$ from \eqref{beta} and \eqref{tkeq} with $k_0=k+1$, so that $x_{k_0+1}=R_{x_{k_0}}(\alpha_{k_0}\eta_{k_0})=R_{x_{k_0}}(0)=x_{k_0}$.
It then follows that $\grad f(x_k)=0$ for all $k\ge k_0$.
Eq.~\eqref{glocon} clearly holds in such a case.

We shall consider the case in which $\grad f(x_k)\neq 0$ for all $k$ and prove \eqref{glocon} by contradiction.
Assume that \eqref{glocon} does not hold, that is, there exists a constant $\gamma>0$ such that
\begin{equation}
\norm{\grad f(x_k)}_{x_k}\ge \gamma>0,\qquad \forall k\ge 0.\label{bound}
\end{equation}
Now from \eqref{cosdef} and \eqref{lemma}, we obtain
\begin{equation}
\cos\theta_k\ge \frac{1-2c_2}{1-c_2}\frac{\norm{\grad f(x_k)}_{x_k}}{\norm{\eta_k}_{x_k}}.\label{cosineq}
\end{equation}
On account of Thm.~\ref{zoutendijk}, Eqs.~\eqref{zoucos} and \eqref{cosineq} are put together to provide
\begin{equation}
\sum_{k=0}^{\infty}\frac{\norm{\grad f(x_k)}_{x_k}^4}{\norm{\eta_k}_{x_k}^2}<\infty.\label{inf}
\end{equation}
On the other hand, Eqs.~\eqref{tkandt}, \eqref{lemma}, and the strong Wolfe condition \eqref{wolfem2} are put together to give
\begin{align}
\abs{\langle\grad f(x_k),\mathcal{T}^{(k-1)}_{\alpha_{k-1}\eta_{k-1}}(\eta_{k-1})\rangle_{x_k}}\le & \abs{\langle\grad f(x_k),\mathcal{T}^R_{\alpha_{k-1}\eta_{k-1}}(\eta_{k-1})\rangle_{x_k}}\notag\\
\le &-c_2\langle \grad f(x_{k-1}),\eta_{k-1}\rangle_{x_{k-1}}\notag\\
\le & \frac{c_2}{1-c_2}\norm{\grad f(x_{k-1})}_{x_{k-1}}^2.
\end{align}
Using this inequality and the definition of $\beta_k$, we obtain the recurrence inequality for $\norm{\eta_k}^2_{x_k}$ as follows:
\begin{align}
&\norm{\eta_k}_{x_k}^2\notag\\
=&\norm{-\grad f(x_k)+\beta_k\mathcal{T}^{(k-1)}_{\alpha_{k-1}\eta_{k-1}}(\eta_{k-1})}_{x_k}^2\notag\\
\le &\norm{\grad f(x_k)}_{x_k}^2+2\beta_k\abs{\langle\grad f(x_k),\mathcal{T}^{(k-1)}_{\alpha_{k-1}\eta_{k-1}}(\eta_{k-1})\rangle_{x_k}}+\beta_k^2\norm{\mathcal{T}^{(k-1)}_{\alpha_{k-1}\eta_{k-1}}(\eta_{k-1})}_{x_k}^2\notag\\
\le &\norm{\grad f(x_k)}_{x_k}^2+\frac{2c_2}{1-c_2}\beta_k\norm{\grad f(x_{k-1})}_{x_{k-1}}^2+\beta_k^2\norm{\eta_{k-1}}_{x_{k-1}}^2\notag\\
= &c\norm{\grad f(x_k)}_{x_k}^2+\beta_k^2\norm{\eta_{k-1}}_{x_{k-1}}^2,\label{dots}
\end{align}
where we have used the fact that $\norm{\mathcal{T}^{(k-1)}_{\alpha_{k-1}\eta_{k-1}}(\eta_{k-1})}_{x_k}\le\norm{\eta_{k-1}}_{x_{k-1}}$ and put \\
$c:=(1+c_2)/(1-c_2)>1$.
The successive use of this inequality together with the definition of $\beta_k$ results in
\begin{align}
&\norm{\eta_k}_{x_k}^2\notag\\
\le &c\left(\norm{\grad f(x_k)}_{x_k}^2+\beta_k^2\norm{\grad f(x_{k-1})}_{x_{k-1}}^2+\cdots+\beta_k^2\beta_{k-1}^2\cdots\beta_2^2\norm{\grad f(x_1)}_{x_1}^2\right)\notag\\
&+\beta_k^2\beta_{k-1}^2\cdots\beta_1^2\norm{\eta_0}_{x_0}^2\notag\\
=&c\norm{\grad f(x_k)}_{x_k}^4\left(\norm{\grad f(x_k)}_{x_k}^{-2}+\norm{\grad f(x_{k-1})}_{x_{k-1}}^{-2}+\cdots+\norm{\grad f(x_1)}_{x_1}^{-2}\right)\notag\\
&+\norm{\grad f(x_k)}_{x_k}^4\norm{\grad f(x_0)}_{x_0}^{-2}\notag\\
<&c\norm{\grad f(x_k)}_{x_k}^4\sum_{j=0}^k\norm{\grad f(x_j)}_{x_j}^{-2}\le \frac{c}{\gamma^2}\norm{\grad f(x_k)}_{x_k}^4(k+1),\label{asta}
\end{align}
where use has been made of \eqref{bound} in the last inequality.
The inequality \eqref{asta} gives rise to
\begin{equation}
\sum_{k=0}^{\infty}\frac{\norm{\grad f(x_k)}_{x_k}^4}{\norm{\eta_k}_{x_k}^2}\ge \sum_{k=0}^{\infty}\frac{\gamma^2}{c}\frac{1}{k+1}=\infty.
\end{equation}
This contradicts \eqref{inf} and the proof is completed.
\end{proof}

\section{Numerical experiments}\label{experiments}
In this section, we compare Algorithm \ref{CGMTk} with Algorithm \ref{CGM} by numerical experiments.
As is shown in \cite{ring}, if the vector transport $\mathcal{T}^R$ as the differentiated retraction satisfies the inequality \eqref{normineq},
the convergence property of Algorithm \ref{CGM} is proved.
However, if \eqref{normineq} does not hold, it is not always ensured that sequences generated by Algorithm \ref{CGM} converge.
In contrast with this, Algorithm \ref{CGMTk} indeed works well even if \eqref{normineq} fails to hold, as is verified in Thm.~\ref{thm_glocon}.
In the following, we give two examples which show that Algorithm \ref{CGMTk} works better than Algorithm \ref{CGM}.
One of the examples is somewhat artificial but well illustrates the situation in which a sequence generated by Algorithm \ref{CGM} is unlikely to converge.
The other is a more natural example encountered in a practical problem.

In both of two examples, we consider the following Rayleigh quotient minimization problem on the sphere $S^{n-1}:=\left\{x\in\mathbb R^n\,|\,x^Tx=1\right\}$ \cite{absil, helmke}:
\begin{Prob}\label{rayleigh_prob}
\begin{align}
{\rm minimize} \,\,\,\,\,\,& f(x)=x^TAx,\label{func_sphere}\\
{\rm subject\,\,to} \,\,\,& x\in S^{n-1},
\end{align}
\end{Prob}
\noindent where $A:=\diag(\lambda_1,\lambda_2,\ldots,\lambda_n)$ with $\lambda_1<\lambda_2<\cdots<\lambda_n$.
The optimal solutions of this problem are $\pm(1,0,0,\ldots,0)^T$, which are the unit eigenvectors of $A$ associated with the smallest eigenvalue $\lambda_1$.

\subsection{A sphere endowed with a peculiar metric}\label{sphere1}
Consider Problem \ref{rayleigh_prob} with $n=20$ and $A=\diag(1,2,\ldots,20)$.
A Riemannian metric $g(\cdot,\cdot)$ on $S^{n-1}$ is here defined by
\begin{equation}
g_x(\xi_x,\eta_x):=\xi_x^TG_x\eta_x,\qquad \xi_x,\eta_x\in T_xS^{n-1},\label{met_sphere}
\end{equation}
where $G_x:=\diag(10000(x^{(1)})^2+1,1,1,\ldots,1)$, and where $x^{(1)}$ denotes the first component of the column vector $x$.
It is to be noted that this metric is not the standard one on $S^{n-1}$.
The norm $\norm{\xi_x}_x$ of $\xi_x\in T_x S^{n-1}$ is then defined to be $\norm{\xi_x}_x=\sqrt{g_x(\xi_x,\xi_x)}$.
If $x$ is close to the optimal solutions $\pm(1,0,0,\ldots,0)$, then $(x^{(1)})^2$ is nearly $1$.
Since the first diagonal element of $G_x$ is large because of the coefficient $10000$, the closer $x$ is to $\pm(1,0,0,\ldots,0)$, the larger the norm $\norm{\xi_x}_x$ tends to be.

With respect to the metric \eqref{met_sphere}, the gradient of $f$ is described as
\begin{equation}
\grad f(x)=2\left(I-\frac{G_x^{-1}xx^T}{x^TG_x^{-1}x}\right)G_x^{-1}Ax.\label{experiment_grad}
\end{equation}
Indeed, the right-hand side of \eqref{experiment_grad} belongs to $T_x S^{n-1}=\left\{\xi\in\mathbb R^{n}\,|\,x^T\xi=0\right\}$ and it holds that
\begin{equation}
g_x\left( 2\left(I-\frac{G_x^{-1}xx^T}{x^TG_x^{-1}x}\right)G_x^{-1}Ax,\ \xi\right)=2x^TA\xi=\D f(x)[\xi]
\end{equation}
for any $\xi\in T_x S^{n-1}$.
Let $R$ be the retraction on $S^{n-1}$ defined by
\begin{equation}
R_x(\xi)=\frac{x+\xi}{\sqrt{(x+\xi)^T(x+\xi)}},\qquad \xi\in T_x S^{n-1},\ x\in S^{n-1},
\end{equation}
which is the special case of the QR retraction \eqref{QRret} on the Stiefel manifold defined in Appendix \ref{app}.
For this $R$, the differentiated retraction $\mathcal{T}^R$ defined by \eqref{dr} is written out as
\begin{equation}
\mathcal{T}^R_{\eta}(\xi)=\frac{1}{\sqrt{(x+\eta)^T(x+\eta)}}\left(I-\frac{(x+\eta)(x+\eta)^T}{(x+\eta)^T(x+\eta)}\right)\xi,\qquad \eta,\xi\in T_x S^{n-1},\ x\in S^{n-1}.
\end{equation}

We note that though the metric endowed with is not the standard one, the Lipschitzian condition \eqref{lip} holds, as is mentioned in Rem.~\ref{remark_app2} in Appendix \ref{app}.
Hence from Thm.~\ref{thm_glocon}, Algorithm \ref{CGMTk} works well in theory.

\begin{figure}[htbp]
  \begin{center}
   \includegraphics[width=100mm]{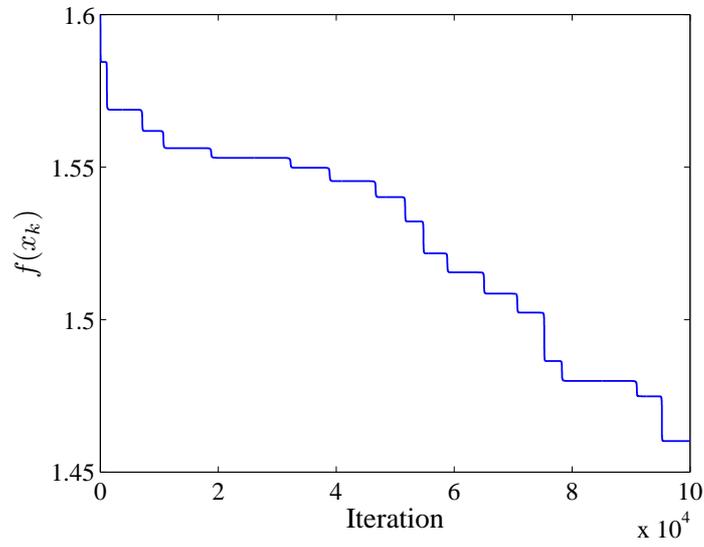}
  \end{center}
  \caption{The sequence of the values $f(x_k)$ of the objective function $f$ evaluated on the sequence $\left\{x_k\right\}$ generated by Algorithm \ref{CGM}.}
  \label{fig:f}
\end{figure}
\begin{figure}[htbp]
  \begin{center}
   \includegraphics[width=100mm]{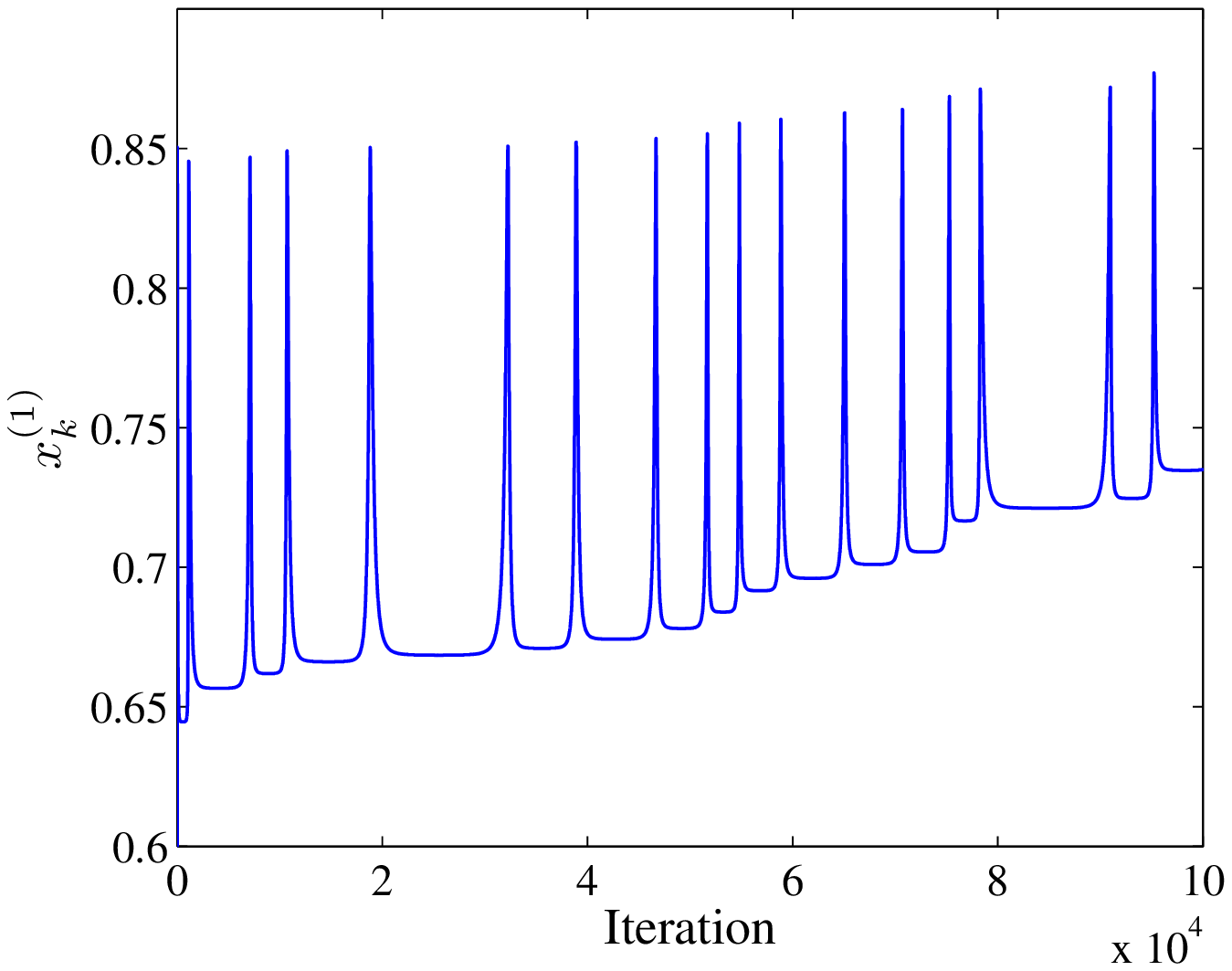}
  \end{center}
  \caption{The sequence of the first components $x_k^{(1)}$ from the sequence $\left\{x_k\right\}$ generated by Algorithm \ref{CGM}.}
  \label{fig:1}
\end{figure}
\begin{figure}[htbp]
  \begin{center}
   \includegraphics[width=100mm]{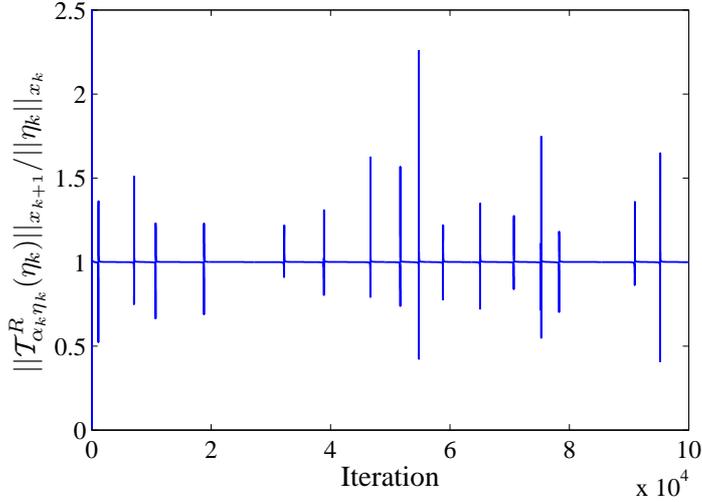}
  \end{center}
  \caption{Ratios $\norm{\mathcal{T}^R_{\alpha_k\eta_k}(\eta_k)}_{x_{k+1}}/\norm{\eta_k}_{x_k}$ evaluated on the sequences $\left\{x_k\right\}$ and $\left\{\eta_k\right\}$ generated by Algorithm \ref{CGM}.}
  \label{fig:2}
\end{figure}

Figs.~\ref{fig:f}, \ref{fig:1}, and \ref{fig:2} show numerical results from applying Algorithm \ref{CGM} to Problem \ref{rayleigh_prob} with the initial point $x_0=(1,1,\ldots,1)^T/2\sqrt{5}\in S^{n-1}$ with $n=20$.
The vertical axes of Figs.~\ref{fig:f}, \ref{fig:1}, and \ref{fig:2} carry values of $f(x_k)$ at $x_k$, values of the first components $x_k^{(1)}$ of $x_k$, and values of the ratios $\norm{\mathcal{T}^R_{\alpha_k\eta_k}(\eta_k)}_{x_{k+1}}/\norm{\eta_k}_{x_k}$, respectively.
Note that for the optimal solution $x_*=(1,0,0,\ldots,0)^T\in S^{n-1}$ which the current generated sequence $\left\{x_k\right\}$ is expected to approach, the target value is $f(x_*)=x_*^{(1)}=1$ in both Figs.~\ref{fig:f} and \ref{fig:1}.
Though the $\left\{x_k\right\}$ seems to come close to $x_*$ bit by bit, the convergence is not observed even after $10^5$ iterations.
At the iteration number $10^5$, $f(x_k)$ is far from $f(x_*)=1$, as is seen from Fig.~\ref{fig:f}.
Fig.~\ref{fig:1} shows that the sequence is intermittently repelled from the target point, when approaching it.
If more iterations, say $10^7$, are performed, the graph of $\{x_k^{(1)}\}$ has almost the same shape, that is, sharp peaks repeatedly appear in Fig.~\ref{fig:1} with extended iterations.
If $\norm{\mathcal{T}^R_{\alpha_k\eta_k}(\eta_k)}_{x_{k+1}}/\norm{\eta_k}_{x_k}\le 1$ for all $k\in\mathbb N$, the sequence $\left\{x_k\right\}$ would converge.
However, as is shown in Fig.~\ref{fig:2}, the ratio $\norm{\mathcal{T}^R_{\alpha_k\eta_k}(\eta_k)}_{x_{k+1}}/\norm{\eta_k}_{x_k}$ intermittently exceeds the value $1$.
This fact seems to prevent the sequence from converging, as long as numerical experiments suggest.
To gain insight into the non-convergence problem, we put Figs.~\ref{fig:1} and \ref{fig:2} together into Fig.~\ref{fig:3}, which shows that the peaks of two graphs synchronize.
\begin{figure}[htbp]
  \begin{center}
   \includegraphics[width=100mm]{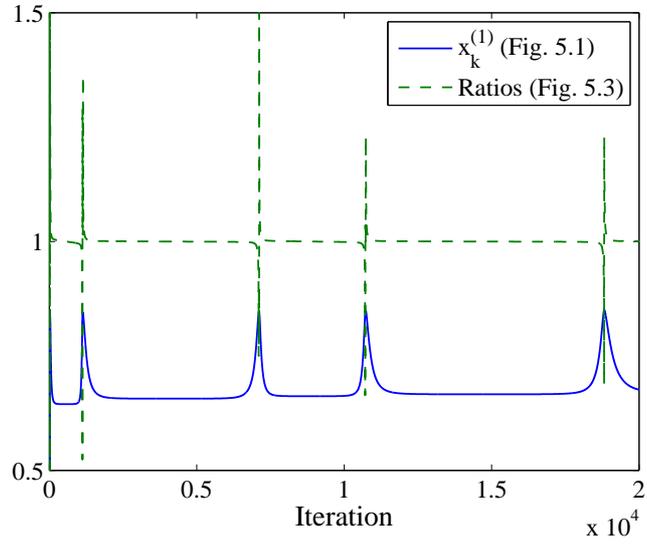}
  \end{center}
  \caption{$x_k^{(1)}$ and $\norm{\mathcal{T}^R_{\alpha_k\eta_k}(\eta_k)}_{x_{k+1}}/\norm{\eta_k}_{x_k}$ by Algorithm \ref{CGM}.}
  \label{fig:3}
\end{figure}
This suggests that the violation of the inequality \eqref{normineq} makes the sequence fail to approach the optimal solution $x_*$.
This phenomenon is caused by the large first diagonal element of $G_x$ in the neighbourhood of $x_*$.

\begin{figure}[htbp]
  \begin{center}
   \includegraphics[width=100mm]{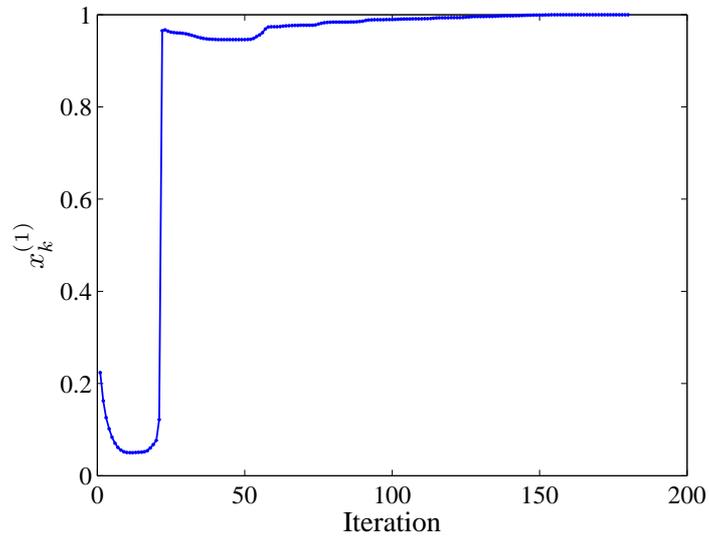}
  \end{center}
  \caption{The sequence of the first components $x_k^{(1)}$ from the sequence $\left\{x_k\right\}$ generated by Algorithm \ref{CGMTk}.}
  \label{fig:4}
\end{figure}
In contrast with this, in Algorithm \ref{CGMTk}, the vector transport $\mathcal{T}^R$ is scaled if necessary, and thereby generated sequences converge to solve Problem \ref{rayleigh_prob}.
In comparison with Fig.~\ref{fig:1}, Fig.~\ref{fig:4} shows that the present algorithm generates a converging sequence, resolving the difficulty of being repelled from the optimal solution.
We here note that the inequality $\norm{\mathcal{T}_{\alpha_k\eta_k}^{(k)}(\eta_k)}_{x_{k+1}}\le\norm{\eta_k}_{x_k}$ is never violated in this algorithm.

We now investigate the performance of Algorithm \ref{CGMTk} in more detail with interest in comparison with a restart strategy in the conjugate gradient method.
\begin{figure}[htbp]
  \begin{center}
   \includegraphics[width=100mm]{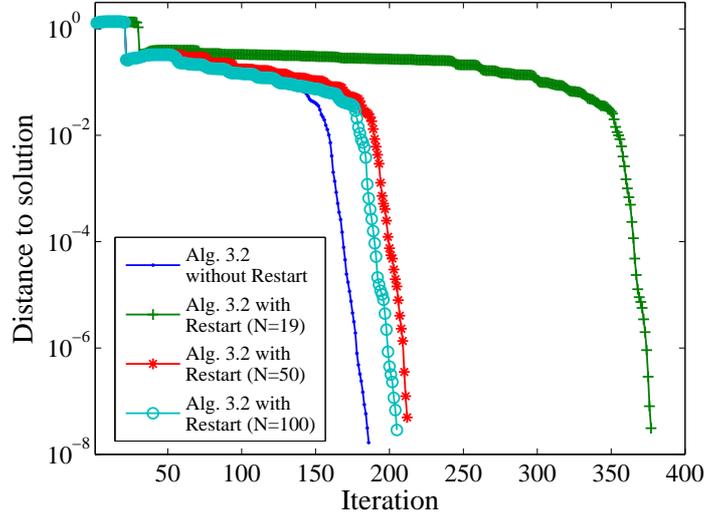}
  \end{center}
  \caption{The sequences of the distances between $x_k$ and $x_*$ with respect to the sequences $\left\{x_k\right\}$ generated by Algorithm \ref{CGMTk} with several restarting strategies.}
  \label{fig:6}
\end{figure}
As is well known, in a nonlinear conjugate gradient method on the Euclidean space,
the iteration is often restarted at every $N$ steps by taking a steepest descent search direction,
where $N$ is usually chosen to be the dimension of the search space in the problem.
To gain a sight of the performance of the restart method on a Riemannian manifold, we introduce a similar restart strategy into Algorithms \ref{CGM} and \ref{CGMTk}, that is,
we set $\beta_{k+1}=0$ in Step 5 of each algorithm at every $N$ steps.
A choice for $N$ is $19$, which is the dimension of $S^{n-1}$ with $n=20$.
For comparison, the both algorithms with restarts are also performed for $N=50$ and $N=100$.
The results from Algorithm \ref{CGMTk} with and without restart are shown in Fig.~\ref{fig:6}.
The vertical axis of Fig.~\ref{fig:6} carries $\sqrt{(x_k-x_*)^T(x_k-x_*)}$, which is an approximation of the distance between $x_k$ and $x_*$ on $S^{n-1}$.
We can observe from the graphs in Fig.~\ref{fig:6} that Algorithm \ref{CGMTk} with and without restart has a superlinear convergence property.
Fig.~\ref{fig:6} shows further that Algorithm \ref{CGMTk} without restart exhibits better performance than Algorithm \ref{CGMTk} with a few variants of restarts,
which means that the restart strategy fails to improve the performance of Algorithm \ref{CGMTk}.

\begin{figure}[htbp]
  \begin{center}
   \includegraphics[width=100mm]{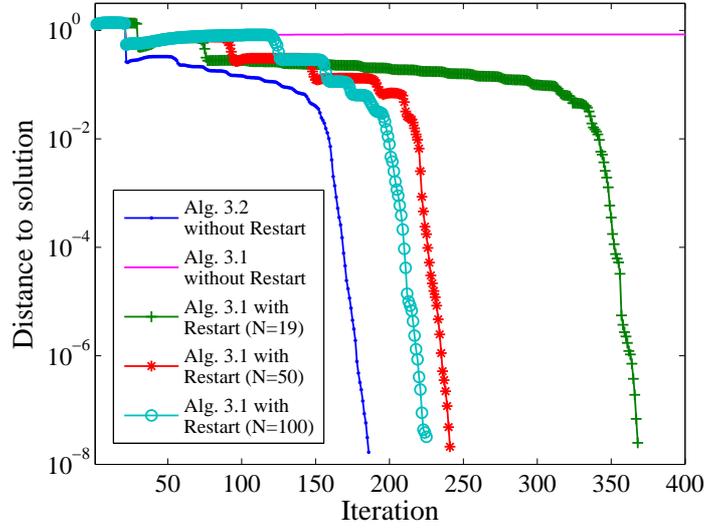}
  \end{center}
  \caption{The sequences of the distances between $x_k$ and $x_*$ with respect to the sequences $\left\{x_k\right\}$ generated by Algorithm \ref{CGMTk} and Algorithm \ref{CGM} with several restarting strategies.}
  \label{fig:7}
\end{figure}
On the contrary, the restart strategy improves the performance of Algorithm \ref{CGM},
but the resultant performance is not comparable to Algorithm \ref{CGMTk} without restart yet.
A numerical evidence is shown in Fig.~\ref{fig:7}.

\subsection{The sphere endowed with the orthographic retraction}
We give a more natural example, in which the inequality \eqref{normineq} is never satisfied.
Consider Problem \ref{rayleigh_prob} with $n=100$ and $A=\diag(1,2,\ldots,100)/100$.
The difference from the example in Subsection \ref{sphere1} is the choice of a Riemannian metric and a retraction.
We in turn endow the sphere $S^{n-1}$ with the induced metric $\langle\cdot,\cdot\rangle$ from the natural inner product on $\mathbb R^n$:
\begin{equation}
\langle\xi_x,\eta_x\rangle_x:=\xi_x^T\eta_x,\qquad \xi_x,\eta_x\in T_xS^{n-1}.\label{nat_met_sphere}
\end{equation}
The norm of $\xi_x\in T_x S^{n-1}$ is then defined to be $\norm{\xi_x}_x=\sqrt{\xi_x^T\xi_x}$ as usual.
With the natural metric $\langle\cdot,\cdot\rangle$, the gradient of $f$ is written out as
\begin{equation}
\grad f(x)=2(I-xx^T)Ax.
\end{equation}
We consider the orthographic retraction $R$ on $S^{n-1}$ \cite{absil_malick}, which is defined to be
\begin{equation}
R_x(\xi)=\sqrt{1-\xi^T\xi}\,x+\xi,\qquad \xi\in T_x S^{n-1}\ \text{with}\ \norm{\xi}_x< 1.
\end{equation}
Associated with this $R$, the vector transport $\mathcal{T}^R$ is written out as
\begin{equation}
\mathcal{T}^R_{\eta}(\xi)=\xi-\frac{\eta^T\xi}{\sqrt{1-\eta^T\eta}}x,\qquad \eta,\xi\in T_x S^{n-1}\ \text{with}\ \norm{\eta}_x,\norm{\xi}_x< 1,\ x\in S^{n-1}.
\end{equation}
For this $\mathcal{T}^R$, the norm $\norm{\mathcal{T}^R_{\eta}(\xi)}_{R_x(\eta)}$ is evaluated as
\begin{equation}
\norm{\mathcal{T}^R_{\eta}(\xi)}_{R_x(\eta)}^2=\norm{\xi}_x^2+\frac{(\eta^T\xi)^2}{1-\norm{\eta}_x^2}\ge\norm{\xi}_x^2,
\end{equation}
where use has been made of $x^Tx=1$ and $x^T\xi=0$.
Thus, the inequality \eqref{normineq}, which is the key condition for the proof of the global convergence property of Algorithm \ref{CGM}, is violated unless $\eta_k=0$.
In spite of this fact, we may try to perform Algorithm \ref{CGM} for this problem.
If the generated sequence does not diverge, we can compare the result with that obtained by Algorithm \ref{CGMTk}.
We performed Algorithms \ref{CGM} and \ref{CGMTk} and obtained Fig.~\ref{fig:8}, whose vertical axis carries $\sqrt{(x_k-x_*)^T(x_k-x_*)}$.
The figure shows the superiority of the proposed algorithm.
\begin{figure}[htbp]
  \begin{center}
   \includegraphics[width=100mm]{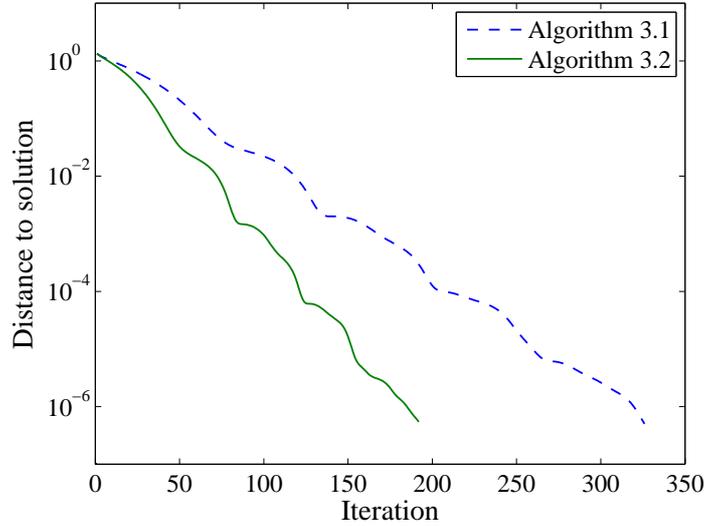}
  \end{center}
  \caption{The sequences of distances between $x_k$ and $x_*$ for the sequences $\left\{x_k\right\}$ generated by Algorithms \ref{CGM} and \ref{CGMTk} with the orthographic retraction.}
  \label{fig:8}
\end{figure}

\section{Concluding Remarks}\label{conclusion}
We have dealt with the global convergence of the conjugate gradient method with the Fletcher-Reeves $\beta$.
Though the conjugate gradient method generates globally converging sequences in the Euclidean space, the conjugate gradient method on a Riemannian manifold $M$ has not been shown to have a convergence property in general, but under the assumption that the vector transport $\mathcal{T}^R$ as the differentiated retraction does not increase the norm of the tangent vector, the convergence is proved in \cite{ring}.
If the parallel translation is adopted as a vector transport, the conjugate gradient method is shown to generate converging sequences, as is given in \cite{smith}.
However, the parallel translation is not convenient for computational effectiveness.
For computational efficiency, we have introduced a vector transport, in place of the parallel translation, with a modification that the vector transport $\mathcal{T}^R$ is replaced by the scaled vector transport $\mathcal{T}^0$ only when $\mathcal{T}^R$ increases the norm of the search direction vector.
The idea is simple but effective.
We have achieved a balance between computational efficiency and the global convergence by proposing Algorithm \ref{CGMTk}.
We have shown the convergence of the present algorithm both in the theoretical and the numerical viewpoints.
In particular, we have performed numerical experiments to show that the present algorithm can solve problems for which the existing algorithm cannot work well because of the violation of the assumption about the vector transport.

\begin{appendix}
\section{Examples in which the condition \eqref{lip} holds}\label{app}
In Thm.~\ref{zoutendijk}, we assume that the condition \eqref{lip} holds.
We here compare \eqref{lip} with the condition that $f\circ R_x$ is Lipschitz continuously differentiable uniformly for $x$, that is, there exists a Lipschitz constant $L>0$ such that
\begin{equation}
\norm{\D (f\circ R_x)(\xi)-\D (f\circ R_x)(\zeta)}\le L\norm{\xi-\zeta}_x,\qquad \xi,\zeta\in T_x M, x\in M,\label{appendixlip}
\end{equation}
where the $\norm{\cdot}$ of the left-hand side means the operator norm (see \cite{ring} for detail).
The condition \eqref{appendixlip} is equivalent to
\begin{equation}
\sup_{\norm{\eta}_x=1} \abs{(\D (f\circ R_x)(\xi)-\D (f\circ R_x)(\zeta))[\eta]}\le L\norm{\xi-\zeta}_x,\qquad \xi,\zeta\in T_x M, x\in M.\label{appendixlip2}
\end{equation}
In particular, setting $\zeta=0$ and $\xi=t\eta$ in \eqref{appendixlip2} yields \eqref{lip}.
In this sense, the condition \eqref{lip} is a weaker form of \eqref{appendixlip}.
The assumption \eqref{lip} is of practical use.
For example, the problem of minimizing the Brockett cost function on the Stiefel manifold $\St(p,n)$ with the natural induced metric \cite{absil} has this property, as is shown below.

Let $n, p$ be positive integers with $n\ge p$.
The Stiefel manifold $\St(p,n)$ is defined to be $\St(p,n):=\left\{X\in\mathbb R^{n\times p}\,|\,X^TX=I_p\right\}$.
We consider $\St(p,n)$ as a Riemannian submanifold of $\mathbb R^{n\times p}$ endowed with the natural induced metric
\begin{equation}
\langle\xi,\eta\rangle_X:=\tr(\xi^T\eta),\qquad \xi,\eta\in T_X\!\St(p,n).\label{stmet}
\end{equation}
Let $A$ be an $n\times n$ symmetric matrix and $N:=\diag(\mu_1,\mu_2,\ldots,\mu_p)$ with $0<\mu_1<\mu_2<\cdots<\mu_p$.
The Brockett cost function $f$ is defined on $\St(p,n)$ to be
\begin{equation}
f(X)=\tr\left(X^TAXN\right).\label{bro}
\end{equation}
Further, the QR decomposition-based retraction (which we call the QR retraction) $R$ is defined to be
\begin{equation}
R_X(\xi):=\qf(X+\xi),\qquad \xi\in T_X\!\St(p,n),\ X\in \St(p,n),\label{QRret}
\end{equation}
where $\qf(B)$ denotes the Q-factor of the QR decomposition of a full rank matrix $B\in\mathbb R^{n\times p}$.
That is, if $B$ is decomposed into $B=QR$, where $Q\in\St(p,n)$ and $R$ is an upper triangular $p\times p$ matrix with positive diagonal elements, then $\qf(B)=Q$.

\begin{Prop}\label{prop_app1}
The inequality \eqref{lip} holds for the Brockett cost function \eqref{bro} on $M=\St(p,n)$, where $\St(p,n)$ is endowed with the natural induced metric \eqref{stmet}, and where the QR retraction \eqref{QRret} is adopted.
\end{Prop}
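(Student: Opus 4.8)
The plan is to reduce \eqref{lip} to two uniform estimates obtained by differentiating $f\circ R_X$ along the ray $t\mapsto X+t\eta$. Fix $X\in\St(p,n)$ and $\eta\in T_X\St(p,n)$ with $\norm{\eta}_X=1$ and put $g(t):=(f\circ R_X)(t\eta)=f\bigl(\qf(X+t\eta)\bigr)$, so that \eqref{lip} amounts to $\abs{g'(t)-g'(0)}\le Lt$ with $L$ independent of $X$, $\eta$ and $t\ge 0$. First I would check that $g$ is smooth on the whole of $[0,\infty)$: since $\eta\in T_X\St(p,n)$ the matrix $S:=X^T\eta$ is skew-symmetric, so $X^T(X+t\eta)=I_p+tS$ satisfies $v^T(I_p+tS)v=\norm{v}^2>0$ for $v\neq 0$ and is therefore invertible for every $t\ge 0$; hence $X+t\eta$ has full column rank, $\qf(X+t\eta)$ is well defined, and $t\mapsto\qf(X+t\eta)$ is smooth.

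Next I would bound $g'(t)$ uniformly in $X$, $\eta$, $t$. Writing $g'(t)=\langle\grad f(R_X(t\eta)),\D R_X(t\eta)[\eta]\rangle_{R_X(t\eta)}$ and noting $\D R_X(t\eta)[\eta]=\mathcal T^R_{t\eta}(\eta)$ by \eqref{dr}, I estimate the two factors separately. The factor $\norm{\grad f(\cdot)}$ is bounded by a constant depending only on $A$, $N$, $n$, $p$, because the Brockett function \eqref{bro} is a polynomial restricted to the compact manifold $\St(p,n)$. For $\D R_X(t\eta)[\eta]$ I would invoke the standard formula for the differential of the Q-factor map: writing $X+t\eta=QR$ with $Q=\qf(X+t\eta)$ and $R$ upper triangular with positive diagonal, one has
\[
\D R_X(t\eta)[\eta]=Q\,\rho\!\left(Q^T\eta\,R^{-1}\right)+(I-QQ^T)\,\eta\,R^{-1},
\]
where $\rho$ is the linear (hence bounded, with norm depending only on $p$) map that extracts the skew-symmetric summand in the unique decomposition of a $p\times p$ matrix into a skew-symmetric plus an upper-triangular matrix. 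The crucial point is that
\[
R^TR=(X+t\eta)^T(X+t\eta)=I_p+t^2\eta^T\eta\succeq I_p,
\]
using $X^T\eta+\eta^TX=0$, whence $\norm{R^{-1}}_2\le 1$ uniformly in $t$, $\eta$, $X$. Combined with $\norm{Q}_2=\norm{I-QQ^T}_2=1$ and $\norm{\eta}=1$, this yields $\norm{\D R_X(t\eta)[\eta]}\le 1+\norm{\rho}$, so that $\abs{g'(t)}\le C$ for all $t\ge 0$ with $C$ independent of $X$, $\eta$, $t$.

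Finally I would split on the size of $t$. For $t\ge 1$ the uniform bound gives $\abs{g'(t)-g'(0)}\le 2C\le 2Ct$. For $0\le t\le 1$ I would use instead $\abs{g'(t)-g'(0)}\le t\sup_{s\in[0,1]}\abs{g''(s)}$; since $g''(s)$ depends continuously on $(X,\eta,s)$ and the unit sphere bundle $\{(X,\eta):X\in\St(p,n),\ \eta\in T_X\St(p,n),\ \norm{\eta}_X=1\}$ is compact, this supremum is bounded by some $L_0$ independent of $X$, $\eta$. Taking $L:=\max\{2C,L_0\}$ then gives \eqref{lip}.

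I expect the main obstacle to be the large-$t$ regime: a priori there is nothing preventing $\D R_X(t\eta)[\eta]$ from growing without bound as $t\to\infty$, and taming it is exactly what forces the use of the explicit derivative formula for the Q-factor map together with the scaling identity $R^TR=I_p+t^2\eta^T\eta$, from which $\norm{R^{-1}}_2\le 1$. Once the uniform bound on $g'$ is in hand, the small-$t$ part is a routine compactness argument and the two ranges combine at once.
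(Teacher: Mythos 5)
Your proof is correct, and it takes a genuinely different route from the paper's. The paper proves the stronger statement that the second derivative $\frac{d^2}{dt^2}(f\circ R_X)(t\eta)$ is bounded uniformly over \emph{all} $t\ge 0$: it writes each column of $\qf(X+t\eta)$ via Gram--Schmidt as $g_k(t)/\norm{g_k(t)}$ with $g_k$ a vector-valued polynomial, observes that $q_k(t)^TAq_k(t)$ is then a rational function of $t$ whose second derivative vanishes as $t\to\infty$, and combines this with continuity on the compact unit sphere bundle. You instead split the range of $t$: on $[0,1]$ you use the routine compactness bound on $g''$, and for $t\ge 1$ you bypass the second derivative entirely by bounding $\abs{g'(t)}$ itself uniformly, via the explicit formula for $\D\qf$ together with the identity $R^TR=(X+t\eta)^T(X+t\eta)=I_p+t^2\eta^T\eta\succeq I_p$ (valid precisely because $X^T\eta$ is skew-symmetric), which gives $\norm{R^{-1}}_2\le 1$ for all $t$. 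Each approach has its merits. The paper's argument is self-contained (no formula for the differential of the Q-factor is needed) but is tied to the quadratic structure of the Brockett function, and its final step --- upgrading ``bounded in $t$ for each fixed $(X,\eta)$'' plus ``continuous in $(X,\eta)$'' to a bound uniform over the noncompact set $\mathcal M\times[0,\infty)$ --- is asserted rather tersely and really relies on the uniform decay of the rational functions. Your large-$t$ estimate is manifestly uniform in $(X,\eta,t)$, so you avoid that delicate point altogether; moreover, your argument uses only that $\grad f$ is bounded on the compact Stiefel manifold, so it extends verbatim to any smooth cost function with the QR retraction, not just the Brockett function. The one place to be slightly more careful in a written version is the justification of the differential formula $\D\qf(B)[\Delta]=Q\rho(Q^T\Delta R^{-1})+(I-QQ^T)\Delta R^{-1}$ and the bound $\norm{\rho}$ on the skew-extraction map, but both are standard and your use of them is sound.
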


\begin{proof}
Since the function \eqref{bro} is smooth, we have only to show that
\begin{equation}
\left|\frac{d^2}{dt^2}\left(f\circ R_X\right)(t\eta)\right|\le L,\qquad \eta\in T_X\!\St(p,n)\ \text{with}\ \norm{\eta}_X=1,\ X\in \St(p,n),\ t\ge 0.\label{appeq}
\end{equation}
In fact, Eq.~\eqref{lip} is a straightforward consequence of this inequality.
Let $Q(t)$ be a curve defined by $R_X(t\eta)=\qf(X+t\eta)$, and $x_k,\eta_k,q_k(t)$ denote the $k$-th column vectors of $X, \eta, Q(t)$, respectively.
Then, through the Gram-Schmidt orthonormalization process, we obtain
\begin{equation}
q_k(t)=\frac{x_k+t\eta_k-\sum_{i=1}^{k-1}(q_i(t),x_k+t\eta_k)q_i(t)}{\norm{x_k+t\eta_k-\sum_{i=1}^{k-1}(q_i(t),x_k+t\eta_k)q_i(t)}},\label{qk}
\end{equation}
where $(a,b):=a^Tb$ and $\norm{a}:=\sqrt{(a,a)}$ for $n$-dimensional vectors $a,b$.
By induction on $k$, we can take vector-valued polynomials $g_k(t)$ in $t$ satisfying
\begin{equation}
q_k(t)=\frac{g_k(t)}{\norm{g_k(t)}}, \qquad t\ge 0.\label{qkgk}
\end{equation}
Indeed, for $k=1$, \eqref{qkgk} holds with $g_1(t)=x_1+t\eta_1$.
Suppose that \eqref{qkgk} holds for $1,\ldots,k-1$.
Then we can write out $q_k(t)$ as
\begin{equation}
q_k(t)=\frac{\prod_{j=1}^{k-1}\norm{g_j(t)}^2(x_k+t\eta_k)-\sum_{i=1}^{k-1}\prod_{j\neq i}\norm{g_j(t)}^2(g_i(t),x_k+t\eta_k)g_i(t)}{\norm{\prod_{j=1}^{k-1}\norm{g_j(t)}^2(x_k+t\eta_k)-\sum_{i=1}^{k-1}\prod_{j\neq i}\norm{g_j(t)}^2(g_i(t),x_k+t\eta_k)g_i(t)}}.\label{induction_app}
\end{equation}
Denoting by $g_k(t)$ the numerator of the right-hand side of \eqref{induction_app}, which is a polynomial in $t$, we obtain \eqref{qkgk}.

Let
\begin{equation}
h(X,\eta,t)=\frac{d^2}{dt^2}(f\circ R_X)(t\eta).
\end{equation}
Then, the $h(X,\eta,t)$ is written out as
\begin{equation}
h(X,\eta,t)=\sum_{k=1}^p \mu_k\frac{d^2}{dt^2}\left(q_k(t)^TAq_k(t)\right).\label{app_h}
\end{equation}
Since $q_k(t)^TAq_k(t)=g_k(t)^TAg_k(t)/\norm{g_k(t)}^2$, and since the degree of the numerator polynomial in $t$ is not more than that of the denominator polynomial, the degree of the numerator polynomial from the right-hand side of \eqref{app_h} is less than that of the denominator polynomial, so that one has, as $t\to\infty$,
\begin{equation}
\lim_{t\to\infty}h(X,\eta,t)=0.
\end{equation}
This implies that $h(X,\eta,t)$ is bounded with respect to $t\ge 0$.
Moreover, the $h(X,\eta,t)$ is continuous with respect to $X$ and $\eta$ on the compact set $\left\{(X,\eta)\in T\St(p,n)\,|\,\norm{\eta}_X=1\right\}$. 
It then turns out that $h(X,\eta,t)$ is bounded on the whole domain, which implies that there exists $L>0$ such that \eqref{appeq} holds.
This completes the proof.
\end{proof}

\begin{Remark}\label{remark_app1}
Reviewing the proof, we observe that since the QR retraction is irrespective of the metric with which the $\St(p,n)$ is endowed, and since the set $\left\{(X,\eta)\in T\St(p,n)\,|\,\norm{\eta}_X=1\right\}$ is compact with respect to any metric on $\St(p,n)$, the inequality \eqref{lip} with $R$ being the QR retraction \eqref{QRret} holds for the Brockett cost function \eqref{bro} independently of the choice of a metric.
\end{Remark}

\begin{Remark}\label{remark_app2}
We also note that Prop.~\ref{prop_app1} and Rem.~\ref{remark_app1} cover both the Rayleigh quotient on the sphere $S^{n-1}$ as $p=1$ and the Brockett cost function on the orthogonal group as $p=n$.
In particular, the inequality \eqref{lip} holds for the function \eqref{func_sphere}, though the sphere $S^{n-1}$ is endowed with the non-standard metric \eqref{met_sphere}.
\end{Remark}

Another example for \eqref{lip} comes from the problem of minimizing the function
\begin{equation}
F(U,V)=\tr(U^TAVN)\label{satoobj}
\end{equation} on $\St(p,m)\times \St(p,n)$, where $A$ is an $m\times n$ matrix and $N=\diag(\mu_1,\ldots,\mu_p)$ with $\mu_1>\cdots>\mu_p>0$.
An optimal solution to this problem gives the singular value decomposition of $A$ \cite{sato}.
Let $m, n, p$ be positive integers with $m\ge n\ge p$.
We consider $\St(p,m)\times \St(p,n)$ as a Riemannian submanifold of $\mathbb R^{m\times p}\times \mathbb R^{n\times p}$ endowed with the natural induced metric;
\begin{align}
\langle(\xi_1,\eta_1),(\xi_2,\eta_2)\rangle_{(U,V)}:=\tr(\xi_1^T\xi_2)+\tr(\eta_1^T\eta_2),\notag\\
(\xi_1,\eta_1),(\xi_2,\eta_2)\in T_{(U,V)}\!\left(\St(p,m)\times \St(p,n)\right).\label{satomet}
\end{align}
As in the previous example on $\St(p,n)$, the QR retraction on $\St(p,m)\times \St(p,n)$ is defined by
\begin{equation}
R_{(U,V)}(\xi,\eta):=\left(\qf(U+\xi),\qf(V+\eta)\right),\qquad (\xi,\eta)\in T_{(U,V)}\!\left(\St(p,m)\times \St(p,n)\right) \label{satoQRret}
\end{equation}
for $(U,V)\in \St(p,m)\times \St(p,n)$.

\begin{Prop}
The inequality \eqref{lip} holds for the objective function \eqref{satoobj} on $M=\St(p,m)\times\St(p,n)$, where $M$ is endowed with the natural induced metric \eqref{satomet} and with the QR retraction \eqref{satoQRret}.
\end{Prop}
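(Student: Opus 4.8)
The plan is to reduce the product-manifold case to the single-Stiefel-manifold case already treated in Proposition \ref{prop_app1}, exploiting the block structure of both the objective function and the QR retraction. First I would note that, exactly as in the proof of Proposition \ref{prop_app1}, it suffices to establish the bound
\begin{equation}
\left|\frac{d^2}{dt^2}(F\circ R_{(U,V)})(t\xi,t\eta)\right|\le L,\qquad (\xi,\eta)\in T_{(U,V)}M\ \text{with}\ \norm{(\xi,\eta)}_{(U,V)}=1,\ t\ge 0,\notag
\end{equation}
since integrating this twice and using $\D(F\circ R_{(U,V)})(0)[(\xi,\eta)] = \D(F\circ R_{(U,V)})(0)[(\xi,\eta)]$ yields \eqref{lip}.

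The key observation is that the retraction \eqref{satoQRret} acts componentwise, so writing $P(t):=\qf(U+t\xi)$ and $Q(t):=\qf(V+t\eta)$, we have $(F\circ R_{(U,V)})(t\xi,t\eta)=\tr(P(t)^TAQ(t)N)=\sum_{k=1}^p\mu_k\, p_k(t)^TA q_k(t)$, where $p_k(t)$ and $q_k(t)$ denote the $k$-th columns of $P(t)$ and $Q(t)$. By the same Gram--Schmidt argument as in \eqref{qk}--\eqref{induction_app}, one obtains vector-valued polynomials $g_k(t)$ and $h_k(t)$ in $t$ with $p_k(t)=g_k(t)/\norm{g_k(t)}$ and $q_k(t)=h_k(t)/\norm{h_k(t)}$, where now the degree bounds on numerators versus denominators hold exactly as before. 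Hence each term $p_k(t)^TAq_k(t)$ is a ratio of polynomials in $t$ in which the numerator degree does not exceed the denominator degree, so $\frac{d^2}{dt^2}(F\circ R_{(U,V)})(t\xi,t\eta)\to 0$ as $t\to\infty$, and in particular this second derivative is bounded in $t$ for each fixed $(\xi,\eta)$.

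Finally I would invoke compactness: the set $\{((U,V),(\xi,\eta))\in TM\mid \norm{(\xi,\eta)}_{(U,V)}=1\}$ is compact (it is a sphere bundle over the compact manifold $\St(p,m)\times\St(p,n)$), and the function $((U,V),(\xi,\eta),t)\mapsto \frac{d^2}{dt^2}(F\circ R_{(U,V)})(t\xi,t\eta)$ is continuous in all its arguments and, by the previous step, vanishes as $t\to\infty$ uniformly enough that its supremum over $t\ge 0$ is a continuous function of $((U,V),(\xi,\eta))$ on that compact set; it therefore attains a finite maximum $L$, which is the desired Lipschitz constant. The only mildly delicate point, and the main obstacle, is justifying that the supremum over $t\ge0$ depends continuously on the base data so that the final extreme-value argument applies — but this is handled exactly as in Proposition \ref{prop_app1}, where the explicit rational-function form of $h(X,\eta,t)$ makes the uniform decay as $t\to\infty$ transparent; the product structure introduces no new difficulty beyond bookkeeping of two families of columns instead of one. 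Thus the proof is essentially a verbatim adaptation of that of Proposition \ref{prop_app1}, as noted in Remark \ref{remark_app1}.
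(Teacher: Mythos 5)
Your proof follows essentially the same route as the paper's: reduce to a uniform bound on the second derivative of $t\mapsto (F\circ R_{(U,V)})(t\xi,t\eta)$, exploit the componentwise action of the QR retraction and the Gram--Schmidt polynomial representation of the columns to see that this second derivative is a rational function of $t$ tending to zero as $t\to\infty$, and conclude by compactness of the unit sphere bundle over $\St(p,m)\times\St(p,n)$. (A minor slip: passing from the second-derivative bound to \eqref{lip} requires integrating once, not twice, and the displayed identity $\D(F\circ R_{(U,V)})(0)[(\xi,\eta)]=\D(F\circ R_{(U,V)})(0)[(\xi,\eta)]$ is vacuous; neither point affects the argument.)
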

\begin{proof}
We shall show that
\begin{equation}
\left|\frac{d^2}{dt^2}\left(F\circ R_{(U,V)}\right)(t(\xi,\eta))\right|\le L\label{appeq2}
\end{equation}
for $(\xi,\eta)\in T_{(U,V)}\!\left(\St(p,m)\times\St(p,n)\right)\ \text{with}\ \norm{(\xi,\eta)}_{(U,V)}=1,\ (U,V)\in \St(p,m)\times \St(p,n),\ t\ge 0$.
Put $Q(t)=\qf(U+t\xi),\ S(t)=\qf(V+t\eta)$.
Let $q_k(t)$ and $s_k(t)$ denote the $k$-th column vectors of $Q(t)$ and $S(t)$, respectively.
From Prop.~\ref{prop_app1} and its course of the proof, there exist vector-valued polynomials $g_k(t)$ and $h_k(t)$ such that
\begin{equation}
q_k(t)=\frac{g_k(t)}{\norm{g_k(t)}},\ s_k(t)=\frac{h_k(t)}{\norm{h_k(t)}}.
\end{equation}
Let
\begin{equation}
H(U,V,\xi,\eta,t)=\frac{d^2}{dt^2}\left(F\circ R_{(U,V)}\right)\left(t(\xi,\eta)\right).
\end{equation}
Then we have
\begin{equation}
H(U,V,\xi,\eta,t)=\sum_{k=1}^p \mu_k\frac{d^2}{dt^2}\left(q_k(t)^TAs_k(t)\right).
\end{equation}
Since $q_k(t)^TAs_k(t)=g_k(t)^TAh_k(t)/(\norm{g_k(t)}\norm{h_k(t)})$, by the same reasoning as that for $h(X,\xi,t)$ in Prop.~\ref{prop_app1}, we have
\begin{equation}
\lim_{t\to\infty}H(U,V,\xi,\eta,t)=0,
\end{equation}
so that $H(U,V,\xi,\eta,t)$ is bounded with respect to $t\ge 0$.
Further, $H(U,V,\xi,\eta,t)$ is continuous with respect to $(U,V,\xi,\eta)$ on the compact set\\ $\left\{(U,V,\xi,\eta)\in T \left(\St(p,m)\times\St(p,n)\right)\,|\,\norm{(\xi,\eta)}_{(U,V)}=1\right\}$.
Hence $H(U,V,\xi,\eta,t)$ is bounded on the whole domain.
This completes the proof.
\end{proof}

A remark similar to Rem.~\ref{remark_app1} can be made on the metric to be endowed with on $\St(p,m)\times \St(p,n)$.
The validity of \eqref{lip} is independent of the choice of a metric.

Returning to the case of a general Riemannian manifold $M$, we make a further comment on \eqref{lip}.
We are interested in the range of $t\ge 0$.
Assume that $M$ is compact and $f$ is smooth.
A smooth function on a compact set is Lipschitz continuously differentiable.
However, the set $\{(x,\eta,t)\in TM\times \mathbb R\,|\,\norm{\eta}_x=1, t\ge 0\}$
is not compact even though $M$ is compact.
Therefore, it is not so clear that the inequality \eqref{lip} holds in general.
We here note that the inequality \eqref{lip} is used in the form
\begin{equation}
\D (f\circ R_{x_k})(\alpha_k\eta_k)[\eta_k]-\D (f\circ R_{x_k})(0)[\eta_k]\le \alpha_k L\norm{\eta_k}_{x_k}^2\label{appendixlip3}
\end{equation}
for the proof of Thm.~\ref{zoutendijk}.
A question then arises as to under what condition the inequality \eqref{appendixlip3} holds.
If it is ensured that there exists a constant $m>0$ such that $\alpha_k\norm{\eta_k}_{x_k}\le m$ for all $k$, then we can prove \eqref{appendixlip3}.
Indeed, in order to prove \eqref{appendixlip3} in such a case, the range of $t$ in \eqref{lip} can be restricted to $0\le t\le m$, and the inequality we need to prove as a counterpart to \eqref{lip} is written as
\begin{equation}
\abs{\D(f\circ R_x)(t\eta)[\eta]-\D(f\circ R_x)(0)[\eta]}\le Lt, \quad \eta\in T_xM\ {\rm with}\ \norm{\eta}_x=1,\ x\in M,\ 0\le t \le m.\label{appendixlip4}
\end{equation}
In order that \eqref{appendixlip4} hold, it is sufficient that there exists a constant $L>0$ satisfying
\begin{equation}
\left|\frac{d^2}{dt^2}\left(f\circ R_x\right)(t\eta)\right|\le L,\qquad \eta\in T_x\!M\ \text{with}\ \norm{\eta}_x=1,\ x\in M,\ 0 \le t\le m.\label{appendixlip5}
\end{equation}
Since the left-hand side of the inequality \eqref{appendixlip5} is continuous with respect to $t$ on a compact set $\left\{t\in\mathbb R\,|\, 0\le t\le m\right\}$, there exists $L_{x,\eta}$ for each $(x,\eta)\in \mathcal{M}$ such that \eqref{appendixlip5} with $L=L_{x,\eta}$ holds, where $\mathcal{M}=\{(x,\eta)\in TM \,|\, \norm{\eta}_x=1\}$.
The compactness of the set $\mathcal{M}$ ensures the existence of $L:=\sup_{(x,\eta)\in \mathcal{M}} L_{x,\eta}$ and the $L$ thus defined satisfies \eqref{appendixlip5}.
\end{appendix}

\section*{Acknowledgements}
The authors would like to thank the anonymous referees for providing them with valuable comments that helped them to significantly brush up the paper.
The first author appreciates the JSPS Research Fellowship for Young Scientists.

\end{document}